\newcommand{\R}{\mathbb{R}}
\newcommand{\N}{\mathbb{N}}
\newcommand{\nubf}{\boldsymbol{\nu}}
\newcommand{\pbf}{\mathbf{p}}
\newcommand{\Ebf}{\mathbf{E}}
\newcommand{\dd}{\mathrm{d}}
\newcommand{\tin}{\text{ in }}
\newtheorem{theorem}{Theorem}[section]
\newtheorem{remark}{Remark}[section]
\newtheorem{definition}{Definition}[section]
\newtheorem{lemma}{Lemma}[section]
\newtheorem{proposition}{Proposition}[section]
\begin{document}

\title{Super-localisation of a point-like emitter in a resonant environment : correction of the mirage effect}
\author{Lorenzo Baldassari\thanks{\footnotesize Department of Computational Applied Mathematics and Operations Research, 
Rice University, 
Houston, TX 77005 USA.}   \and Pierre Millien\thanks{\footnotesize  ESPCI Paris, PSL Research University, CNRS, Institut Langevin, France }\and Alice L. Vanel\thanks{\footnotesize Department of Mathematics, 
ETH Z\"urich, 
R\"amistrasse 101, CH-8092 Z\"urich, Switzerland.} }

\date{\today}

\maketitle

\begin{abstract}
In this paper,  we show that it is possible to overcome one of the fundamental limitations of super-resolution microscopy techniques: the necessity to be in an \emph{optically homogeneous} environment.
Using recent modal approximation results from \cite{baldassari2021modal,AMMARI2022676} we show as a proof of concept that it is possible to recover the position of a single point-like emitter in a \emph{known resonant environment} from far-field measurements with a precision two orders of magnitude below the classical Rayleigh limit. The procedure  does not involve solving any partial differential equation,  is computationally light  (optimisation in $\R^d$ with $d$ of the order of $10$) and therefore suited for the recovery of a very large number of single emitters. 
\end{abstract}

\def\keywords2{\vspace{.5em}{\textbf{  Mathematics Subject Classification
(MSC2000).}~\,\relax}}
\def\endkeywords2{\par}
\keywords2{35R30, 35C20.}

\def\keywords{\vspace{.5em}{\textbf{ Keywords.}~\,\relax}}
\def\endkeywords{\par}
\keywords{plasmonic resonance, biomedical imaging, mirage effect, super resolution,  super localisation}

\section{Introduction}
\subsection{Context and position of the problem}

Super-resolution imaging \cite{huang2009super,hell20152015} has had a huge impact on biological imaging (its contribution was recognised by the Nobel Prize in $2014$ \cite{ehrenberg2014nobel}).  With these techniques,  biological structures can be imaged with a recovery of details that are two order of magnitude below the classical diffraction limit of microscopy. The technique is now being used in other areas of material science such as polymer dynamics study \cite{gramlich2014fluorescence,urban2016subsurface} or plasmonics \cite{bouchet2019probing}. We refer the reader to the review paper \cite{Willets:2017aa} and references therein for an overview of the recent research on plasmonics using super-resolved microscopy. Super resolution microscopy's principle is to excite a single point-like emitter in a diffraction-limited spot, and to find the position of the emitter by fitting a point spread function over the spot. The procedure is iterated several thousand times to get a precise image, similar to the technique of \emph{pointillism} in painting. 
There are several types of microscopy depending on whether the excitation of the emitters is tailored on their emissive properties. The latter is what can be called localisation-based super-resolution imaging, and is known under several acronyms such as STORM, PALM, PAINT, \ldots

Single emitters such as quantum dots or fluorescent molecules interact with their environment in several ways.

It has been known that the lifetime emission of single fluorescent molecules depends on the local density of states (LDOS) and therefore they are excellent probes for the near-field around a nano-structure. In~\cite{bouchet2019probing} the authors experimentally measured the lifetime of single emitters around a plasmonic rod while simultaneously super-localising the emitters. They were able to construct a map of the fluorescence lifetime with a resolution approaching $20$ nm.

However, the single emitter can also excite the plasmonic resonances of the nano-structure nearby if its emission frequency is close to a plasmonic resonant frequency. In this case, the far-field image does not only consist of the point spread function of the single emitter but also contains artefacts due to the electromagnetic field scattered by the resonant nanoparticle. It is demonstrated in~\cite{raab17} that coupling between the plasmonic particle and the fluorescent molecule leads to an incorrect molecular localisation of up to $30$ nm, namely a single-molecule mirage.  We refer to the review~\cite{Willets:2017aa} and references therein for more details.  

The only current solution to this problem is to perform a least-square optimisation on the position of the emitter, simulating the measurements at each step by solving the full Maxwell system with a different source position at every step.  This solution could give satisfying results for the localisation of one emitter, but the computational cost makes this method impossible to use to get a real super-resolution image, as the single emitter localisation procedure needs to be done tens of thousands times to get an image.  

The challenge is therefore to find a model that is accurate enough to precisely describe the interaction between a single emitter and a resonant structure, yet computationally light enough to be able to retrieve the positions of tens of thousands of emitters.

\subsection{Mathematical challenges and recent advances}\label{subsec:challenges}

A natural approach to tackle this problem is to model the response of the plasmonic system with excitation independant quantities. A natural way to do this is to use modal analysis. The idea is to express the measured field as a sum of the field generated by the single point-wise emitter and the contribution from the nanoparticle described by a few modes.
The obvious strength of this description is that the modes can be pre-computed, and the procedure of solving the forward problem can be reduced to computing the modal excitation coefficients.

The main challenge is that the mirage phenomenon happens in a regime where the extinction cross section of the nanoparticle is dominated by the scattering cross section. This happens for larger nanoparticles, of typical size larger than $50$ nm (see \cite[section 4]{ammari2017mathematicalscalar} for more details). 

While there has been plenty of work on the mathematical description of plasmonic resonances as an eigenvalue problem~\cite{grieser2014plasmonic,ando2016analysis,ammari2016surface}, most of these works rely on an electro-static approximation that does not describe well the behaviour of metallic nanoparticles when their size becomes \emph{small but comparable to the wavelength} of the electromagnetic field.  For instance, when the size of the nanoparticle is larger than one tenth of the wavelength (around $50$ nm in practice), the quasi-static theory breaks down~\cite{kelly2003optical,moroz2009depolarization}. It is only recently, via the introduction of a perturbative spectral method, that the field scattered by larger nanoparticles has been precisely described \cite{millien18,baldassari2021modal,AMMARI2022676}. 
In this paper, using these recent perturbed modal decompositions, we tackle the problem of the super-localisation of a single emitter near a plasmonic structure in the regime that is relevant for applications.

\subsection{Contribution of the paper}

Using a simple scalar \emph{toy model} that still encompasses all the mathematical complexity of plasmonic resonances, we show that it is possible to recover the position of a single point-like emitter near a resonant structure up to a precision that is two orders of magnitudes below the diffraction limit with a low computational complexity.

The paper is structured as followed: in section \ref{sec:model} we present the mathematical modelling of the physical system and justify its relevance. Then, in section~\ref{sec:modal}, we give the modal approximation of the field. In section~\ref{sec:localisation}, we move on to introduce the imaging functionals that allow the reconstruction of the dipole's position and orientation. Finally, we show numerical experiments that corroborate our results in section~\ref{sec:num} and give concluding remarks in section~\ref{sec:con}.

\section{Model}
\label{sec:model}

\subsection{Notations}

We consider a system composed of a single point-like dipole source (modelling a fluorescent molecule) and a plasmonic particle embedded in a homogeneous medium. The plasmonic particle occupies a bounded simply connected domain $ D \subset \R^2$ of class $C^{1,\alpha}$ for some $0 < \alpha < 1$. It has characteristic size~$\delta$.  The point-like source  is modelled by an ideal electric dipole of centre $z^*\in\R^2\setminus{\overline{D}}$ and direction $\mathbf{p^*}\in\mathbb{S}^1$. We denote the permittivity of the plasmonic particle by $\varepsilon_D$. The permittivity of the background medium is denoted by $\varepsilon_m$. In other words, the permittivity distribution $\varepsilon$ is given by
\begin{equation*}
\varepsilon=\varepsilon_D\chi \left(D\right)+\varepsilon_m\chi \left(\R^2 \setminus \overline{D}\right).
\end{equation*}
Note that $\varepsilon_D$ depends on the frequency $\omega$.  The model used for $\varepsilon_D$ depends on the type of metal.  It is nonrestrictive to assume that $\varepsilon_D$ is described by a Drude model \cite{sarid2010modern}:
\begin{align*}
\varepsilon_D(\omega) =\varepsilon_0\left( 1-\frac{\omega_p^2}{\omega\left(\omega +i\tau^{-1}\right)}\right),
\end{align*}
where $\omega_p$ is the plasma frequency of the bulk material and $\tau$ is the bulk electron relaxation time.  Any similar model can be used \cite{ordal1983optical}.
 The permeability $\mu$ is constant everywhere, $\mu=\mu_m$. We define the wavenumbers $k_D=\omega\sqrt{\varepsilon_D\mu_m}$ and $k_m=\omega\sqrt{\varepsilon_m\mu_m}$. We denote by $z_D$ the centre of the particle $D$. Up to a translation it is nonrestrictive to assume that $z_D=0$.

Consider $\Omega\supset D $ a domain of characteristic size $R\gg  {k_m}^{-1}$ and $d(D,\partial \Omega)\gg {k_m}^{-1}$. Our goal is to reconstruct $\mathbf{p^*}$ and $z^*$ from the knowledge of far-field data, \emph{i.e.}, the knowledge on $\partial \Omega$ of $u$   the solution of
\begin{align}\label{eq:u}
\nabla\cdot\left(\frac{1}{\varepsilon(x)} \nabla u\right) + \omega^2 \mu \ u = \mathbf{p^*} \cdot \nabla \delta_{z^*} \qquad \tin \R^2,
\end{align} satisfying the outgoing Sommerfeld radiation condition
	\begin{equation*}
	\left| \frac{\partial u}{\partial |x|}-ik_mu\right|=\mathcal{O}\left(|x|^{-3/2}\right), \qquad \mbox{as } |x|\rightarrow \infty,
	\end{equation*}
	uniformly in $x/|x|$, for $\Re{k_m}>0$.
For simplicity we assume that $\Omega= B_R$ the disk centred at the origin of radius $R$. The setting is sketched in Figure~\ref{fig:setting}.

\begin{figure}[H]
		\centering
			\includegraphics[trim={4cm 6cm 2cm 2cm},clip,width=10.5cm]{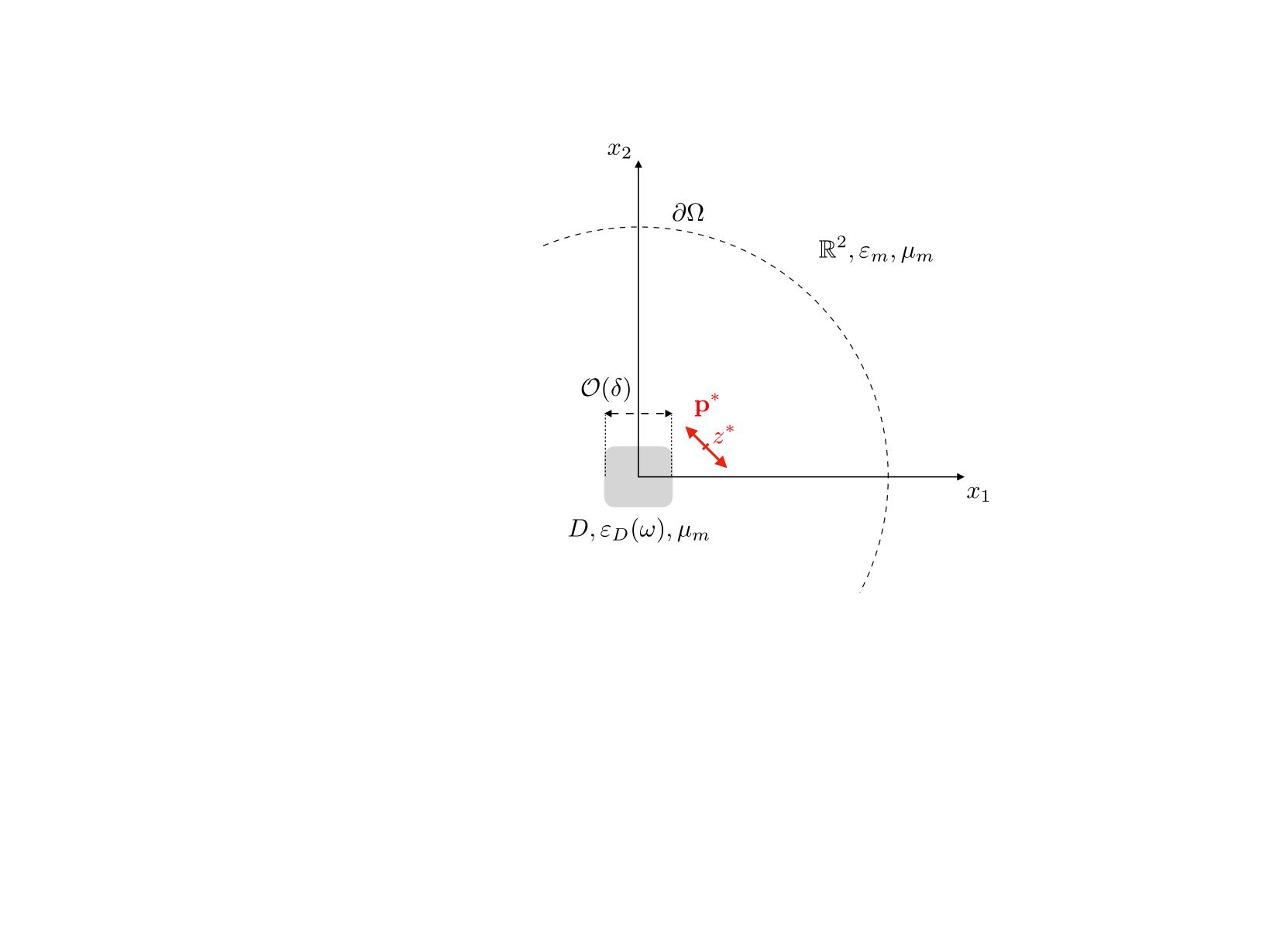} 
		\caption{Setting.}
		\label{fig:setting}
\end{figure}

\begin{proposition} If $\Im \varepsilon_D \neq 0$, then equation \eqref{eq:u} is well posed.
\end{proposition}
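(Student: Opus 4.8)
The plan is to recast \eqref{eq:u} as a problem with a compactly-supported right-hand side by subtracting off the free-space dipole field, and then apply the Fredholm alternative together with a unique-continuation / absorption argument to kill the homogeneous solutions. First I would let $G_m$ be the outgoing fundamental solution of $\Delta + k_m^2$ in $\R^2$ and set $u_{\mathrm{in}} = \varepsilon_m\, \mathbf{p^*}\cdot\nabla_{z^*} G_m(\cdot,z^*)$, so that $u_{\mathrm{in}}$ solves the equation with constant coefficient $\varepsilon_m$ everywhere; the scattered part $u_s = u - u_{\mathrm{in}}$ then satisfies $\nabla\cdot(\tfrac{1}{\varepsilon}\nabla u_s) + \omega^2\mu\, u_s = \nabla\cdot\big((\tfrac{1}{\varepsilon_m}-\tfrac{1}{\varepsilon})\nabla u_{\mathrm{in}}\big)$ with the same radiation condition, and the source on the right is now supported in $\overline{D}$ (note $z^*\notin\overline D$, so $u_{\mathrm{in}}$ is smooth near $D$). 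This is a standard transmission-type problem: one works in the weighted Sobolev space on $\R^2$ adapted to the Sommerfeld condition, or equivalently truncates to a large ball $B_\rho\supset\overline D$ and imposes the exact Dirichlet-to-Neumann boundary operator for $\Delta+k_m^2$ on $\partial B_\rho$.

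Next I would set up the associated sesquilinear form $a(u,v) = \int (\tfrac{1}{\varepsilon}\nabla u\cdot\overline{\nabla v} - \omega^2\mu\, u\overline v) - \langle \Lambda_{k_m} u, v\rangle_{\partial B_\rho}$ on $H^1(B_\rho)$. Because $\varepsilon_D$ and $\varepsilon_m$ are bounded away from $0$ and $\infty$, the principal part $\int \tfrac{1}{\varepsilon}\nabla u\cdot\overline{\nabla v}$ is bounded and the $-\omega^2\mu$ term plus the DtN term are a compact perturbation (the DtN operator for the Helmholtz equation is a bounded, analytic, order-one operator whose "bad" part is of lower order / compact relative to $H^1$), so $a$ is of Fredholm type with index zero. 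Hence existence follows once I show uniqueness for the homogeneous problem ($u_{\mathrm{in}}=0$).

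The uniqueness step is where the hypothesis $\Im\varepsilon_D\neq 0$ enters and is, I expect, the main obstacle — or rather the only genuinely non-mechanical point. Suppose $u$ solves the homogeneous equation with the radiation condition. Multiplying by $\overline u$, integrating over $B_\rho$, and using the standard identity $\Im\langle \Lambda_{k_m} u,u\rangle_{\partial B_\rho} \ge 0$ for outgoing solutions (this is the radiation-condition positivity, equivalently Rellich's lemma), the imaginary part of the resulting identity gives $\int_D \Im(\tfrac{1}{\varepsilon_D})\,|\nabla u|^2 \le 0$; since $\Im\varepsilon_D\neq 0$ forces $\Im(1/\varepsilon_D) = -\Im\varepsilon_D/|\varepsilon_D|^2$ to have a fixed strict sign (and $\Re\varepsilon_m>0$, $\Im\varepsilon_m=0$, so the exterior contributes nothing to this imaginary part), we conclude $\nabla u \equiv 0$ in $D$, hence $u$ is constant in $D$. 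Then I would argue across the interface $\partial D$: the transmission conditions (continuity of $u$ and of $\tfrac{1}{\varepsilon}\partial_\nu u$) give Cauchy data for the exterior Helmholtz problem on $\partial D$, and unique continuation from $\partial D$ into the unbounded component — using that $u$ satisfies $\Delta u + k_m^2 u = 0$ outside $D$ with zero Cauchy data (here one must check $u$ is constant implies $\partial_\nu u = 0$ on the interior side, hence $\tfrac{1}{\varepsilon_m}\partial_\nu u^+=0$, and then also $u$ constant with the exterior equation pins the constant to $0$ by letting $|x|\to\infty$ under the radiation condition) — yields $u\equiv 0$ in $\R^2\setminus\overline D$ and therefore everywhere. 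The one technical care point is the regularity needed to make the integration by parts and the transmission identities rigorous: the $C^{1,\alpha}$ regularity of $\partial D$ assumed in the paper gives $u\in H^1_{\mathrm{loc}}$ with well-defined traces, which suffices.
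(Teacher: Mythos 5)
Your proposal is correct in its overall architecture, but it is a genuinely different route from the one the paper leans on. The paper gives no argument at all: it declares the result classical and points to the treatment in the cited reference, which (consistently with the paper's own Appendix~A) establishes well-posedness by representing $u$ via single-layer potentials as in \eqref{eq:scalar_solution} and proving invertibility of the boundary integral system \eqref{eq:integraleqsystem} (a Fredholm argument on $H^{-1/2}(\partial D)\times H^{-1/2}(\partial D)$ plus uniqueness). You instead use the variational/Dirichlet-to-Neumann truncation with a G\aa rding-type estimate. Both reduce to ``Fredholm of index zero $+$ uniqueness,'' and the uniqueness step (energy identity, sign of the absorption term, Rellich, unique continuation) is essentially shared. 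What your route buys is that it avoids mapping properties of layer potentials and handles the distributional source cleanly: your subtraction of the explicit free-space dipole field is important, since $\mathbf{p^*}\cdot\nabla\delta_{z^*}\notin H^{-1}$ and a direct variational formulation of \eqref{eq:u} would not make sense, whereas the residual source $\nabla\cdot\bigl((\tfrac{1}{\varepsilon_m}-\tfrac{1}{\varepsilon})\nabla u_{\mathrm{in}}\bigr)$ is the divergence of a bounded field supported in $\overline{D}$ (legitimate because $z^*\notin\overline{D}$). The integral-equation route buys the explicit representation that the rest of the paper actually uses.

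Two points in your uniqueness step should be tightened. First, the sign: it is not true that \emph{any} fixed strict sign of $\Im(1/\varepsilon_D)$ suffices. The identity reads $\int_D \Im(1/\varepsilon_D)|\nabla u|^2=\Im\langle\Lambda_{k_m}u,u\rangle\ge 0$, and this forces both sides to vanish only when $\Im(1/\varepsilon_D)\le 0$, i.e.\ when $\Im\varepsilon_D>0$ (the dissipative sign under the $e^{-i\omega t}$ convention, which the Drude model satisfies); with the opposite sign the two nonnegative quantities need not vanish. So your argument really proves the proposition under $\Im\varepsilon_D>0$, which is consistent with the paper's remark that the stated condition is sufficient but not sharp. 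Second, the endgame: the Cauchy data on $\partial D$ is not zero — the Neumann trace vanishes but the Dirichlet trace is an unknown constant $c$ — so ``unique continuation from zero Cauchy data'' is not the right invocation. The clean conclusion is either (i) vanishing of $\Im\langle\Lambda_{k_m}u,u\rangle$ plus Rellich's lemma gives $u\equiv 0$ outside $B_\rho$, then unique continuation in the connected exterior gives $u\equiv 0$ in $\R^2\setminus\overline D$ and hence $c=0$; or (ii) uniqueness for the exterior Neumann problem with the radiation condition applied to the data $\partial_\nu u^+=0$. Either repair is one line, but as written that step does not quite parse.
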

\begin{proof}This is a classical result. A recent treatment of this exact case can be found in \cite{ammari2017mathematicalscalar}.

\end{proof}
\begin{remark}The condition $\Im \varepsilon_D \neq 0$ is not sharp,  but it is sufficient. The full treatment of the case $\Im \varepsilon_D \rightarrow 0$ is done in \cite{nguyen2016limiting}.
\end{remark}

\subsection{Relevance of the scalar model}

The scalar Helmholtz equation is the correct way to describe the electromagnetic field propagation in a two-dimensional environment~\cite[Remark 2.1]{moiola2019acoustic}.  It is relevant to model the propagation of an electromagnetic wave in a medium that is invariant along a coordinate. 

Two cases can be modelled. One case is when the electric field is perpendicular to the plane of propagation. It has the form $(0, 0,E_z)$, while the magnetic field has the form $(H_x,H_y,0)$. This case is often called the \emph{transverse electric} case. The other case is when the magnetic field is transverse to the plane of propagation. It is called the \emph{transverse magnetic} case and the magnetic field and electric field are $(0,0,H_z)$ and $(E_x,E_y,0)$, respectively. 

For a non-magnetic medium described by a space-dependent permittivity distribution $\varepsilon(x)$ with $x\in \R^2$ and a constant magnetic permeability,  the transverse magnetic case is the most mathematically interesting case, as the spectral properties of the transmission problem for a small obstacle (of characteristic size small compared to the wavelength) are very similar to the ones of the Maxwell transmission problem. 
Indeed, in the \emph{static regime}, the field $\Ebf$ can be written as $\Ebf := \nabla \tilde{u}:=(E_x,E_y,0):=(\nabla u , 0)$. Then  the Lippman-Schwinger equation associated with problem \eqref{eq:u}, \emph{i.e.}, the equation solved by the electric field inside the particle $\nabla u = (E_x,E_y)$ is \cite{millien18}:
\begin{align}\label{eq:graduinside}
\eta \nabla u (x) - \nabla \int_D\nabla \Gamma^0(x-y)  \cdot \nabla u(y)\dd y = \frac{1}{1-\varepsilon_r} D^2\Gamma^0(x-z^*) \mathbf{p}^*, \qquad x\in D,
\end{align}
where $\Gamma^k$, $k\geq 0$,  is a fundamental outgoing solution to the Helmholtz operator $\Delta +k^2$ given by
\begin{equation}
\Gamma^{k}(x)=
\begin{dcases}
\frac{1}{2\pi}\ln|x| &\mbox{if }k=0, \\
-\frac{i}{4}H_0^{(1)}{(k|x|)} &\mbox{if }k>0,
\end{dcases}
\end{equation} and $\eta=\varepsilon_m/(\varepsilon_m-\varepsilon_D)$ is a contrast parameter depending on the relative permittivity of the obstacle with respect to the  surrounding medium. 

In the three-dimensional case,  the integral equation solved by the electric field $\Ebf$ inside the particle is very similar (see for instance \cite{costabel2012essential} or \cite[equation (3.1)]{millien18})
\begin{align}\label{eq:Einside}
\eta \Ebf (x) - \nabla \int_D\nabla \tilde{\Gamma}^0(x-y)  \cdot \Ebf(y) \dd y = \frac{1}{1-\varepsilon_r}  D^2\tilde{\Gamma}^0(x-z^*) \mathbf{p}^*, \qquad x\in D,
\end{align}
where $\tilde{\Gamma}^k$ is the fundamental solution of the Helmholtz equation in three dimensions.

We can notice a strong similarity between equations \eqref{eq:graduinside} and \eqref{eq:Einside}, and therefore the scalar model is a good starting point.  The full static and low frequency spectral analysis of equation \eqref{eq:Einside} is performed in~\cite{baldassari2021modal}, while the Maxwell case can be found in~\cite{AMMARI2022676}.  
In this paper we only focus on the two-dimensional case.  

\subsection{Limits of the model}

The mathematical results presented here hold when the size of the nanoparticle becomes very small compared to the wavelength or when the dipole source gets very close to the particle. 
Nevertheless, the model used to describe the interaction between the fluorescent molecule and the nanoparticle is incorrect when the different scales in the problem become too small, and fails to properly describe the physics of the problem, especially any quantum effect that may appear. 
We can mention the \emph{quenching} phenomenon that occurs when the molecule gets very close to the nanoparticle (a few nanometers)~\cite{castanie2010fluorescence,delga2014quantum}.
When the nanoparticle becomes very small (of characteristic size less than $10$nm), the size of the fluorescent molecule can become comparable to the size of the nanoparticle itself. Therefore it is not justified to model the fluorescent molecule as a point dipole, and a more sophisticated (quantum) model has to be considered~\cite{neuman2018coupling}.
However, as mentioned in subsection~\ref{subsec:challenges}, the novelty of the method proposed here is that it can be applied to \emph{larger} nanoparticles, of size smaller but comparable to the wavelength of the electromagnetic field emitted by the fluorescent molecule.

\section{Modal Expansion of the solution}
\label{sec:modal}

In the following section, we recall the results from~\cite{baldassari2021modal} that are needed for the localisation procedure.

We will use standard \emph{layer potentials} notations. The definitions and symmetrisation process of the Neumann-Poincar\'e operator on which the theorem relies are recalled in appendix~\ref{appendix:layer}.  We also refer to the book~\cite{AKL} and references therein. 

\begin{definition}
We define the \emph{modes} of the system by
\begin{equation*}
e^\omega_n(x):=\mathcal{S}_{D}^{k_m}\left[\varphi_n\right](x), \qquad x \in \R^2\setminus \overline{D}, ~n\in \N^*.
\end{equation*}
$\mathcal{S}_{D}^{k_m}$ is the single-layer potential and $\varphi_n$ are the eigenvectors of the Neumann-Poincar\'e operator $\mathcal{K}_D^*$.
\end{definition}

\begin{theorem}\label{theo:expansion} The unique solution $u$ of \eqref{eq:u} can be written as:
\begin{align*}
u(x)= \nabla \Gamma^{k_m}(x,z^*) \cdot \mathbf{p^*}+ \sum_{n=1}^N \alpha_n e^\omega_n(x) + \mathcal{E}_N(x), \qquad x \in \R^2\setminus \overline{D}, N\in \N^*,
\end{align*}
where $\alpha_n$ are coupling coefficients between the dipole source and the modes that depend on $z^*$, $\mathbf{p}^*$ and~$\omega$.  $\mathcal{E}_N(x)$ is an error term that depends on the position of the source, the shape of the particle $D$, the number of modes considered, and the observation position $x$. 
The coupling coefficients $\alpha_n$ have a (rather lengthy) explicit formula that is given in the proof.
\end{theorem}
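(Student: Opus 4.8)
The natural route is a boundary-integral-equation reduction followed by the spectral decomposition of the Neumann--Poincar\'e operator $\mathcal{K}_D^*$; since existence and uniqueness are already granted, it suffices to exhibit one representation of $u$ and invoke uniqueness.

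\emph{Step 1 (integral representation).} Since $z^*\notin\overline{D}$, the field solves $\Delta u + k_D^2 u = 0$ in $D$ and $\Delta u + k_m^2 u = \varepsilon_m\,\mathbf{p}^*\cdot\nabla\delta_{z^*}$ in $\R^2\setminus\overline{D}$, together with the Sommerfeld condition at infinity. I would look for $u$ in the form
\begin{equation*}
u(x)=\begin{dcases}\mathcal{S}_D^{k_D}[\phi](x),& x\in D,\\ \nabla\Gamma^{k_m}(x,z^*)\cdot\mathbf{p}^*+\mathcal{S}_D^{k_m}[\psi](x),& x\in\R^2\setminus\overline{D},\end{dcases}
\end{equation*}
with densities $\phi,\psi\in H^{-1/2}(\partial D)$; this ansatz automatically solves the two Helmholtz equations and the radiation condition (the normalisation constant $\varepsilon_m$ in front of the source being absorbed into $\mathbf{p}^*$). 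Imposing the transmission conditions --- continuity of $u$ and of $\varepsilon^{-1}\partial_\nu u$ across $\partial D$ --- and invoking the jump relations for $\mathcal{S}_D^{k}$ and $\partial_\nu\mathcal{S}_D^{k}$ recalled in Appendix~\ref{appendix:layer}, one obtains a $2\times 2$ system of boundary integral equations for $(\phi,\psi)$ whose right-hand side is assembled from the Dirichlet and Neumann traces on $\partial D$ of the free dipole field $\nabla\Gamma^{k_m}(\cdot,z^*)\cdot\mathbf{p}^*$.

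\emph{Step 2 (reduction and spectral expansion).} Using that $\mathcal{S}_D^{k_D}$ is invertible (here $\Im k_D\neq 0$ since $\Im\varepsilon_D\neq 0$, so no interior Dirichlet resonance interferes), I would eliminate $\phi$ and bring the system to a single equation
\begin{equation*}
\bigl(\lambda(\omega)\,I-\mathcal{K}_D^{*}\bigr)[\psi]=f_{z^*,\mathbf{p}^*}+\mathcal{R}^{\omega}[\psi],
\end{equation*}
where $\lambda(\omega)$ is the usual M\"obius function of the contrast $\eta=\varepsilon_m/(\varepsilon_m-\varepsilon_D)$ appearing in \eqref{eq:graduinside}, $f_{z^*,\mathbf{p}^*}$ is built linearly from the traces of the dipole field, and $\mathcal{R}^\omega$ collects all the frequency corrections $\mathcal{S}_D^{k}-\mathcal{S}_D^{0}$ and $\mathcal{K}_D^{k,*}-\mathcal{K}_D^{*}$, whose size is controlled by $k_m\delta$ and $k_D\delta$ --- this is exactly the perturbative structure set up in \cite{millien18,baldassari2021modal}. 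I would then work in $H^{-1/2}(\partial D)$ equipped with the inner product $\langle\cdot,\cdot\rangle_*$ that symmetrises $\mathcal{K}_D^*$ (Appendix~\ref{appendix:layer}), so that $\{\varphi_n\}$ is an orthonormal eigenbasis with eigenvalues $\lambda_n\to 0$. Expanding $\psi=\sum_n c_n\varphi_n$ and projecting yields
\begin{equation*}
c_n=\frac{\langle f_{z^*,\mathbf{p}^*},\varphi_n\rangle_*}{\lambda(\omega)-\lambda_n}+\bigl(\text{corrections coming from }\mathcal{R}^\omega\bigr),
\end{equation*}
and, after propagating the prefactors produced in Step~1, the leading part is precisely the announced $\alpha_n$. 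Since $\mathcal{S}_D^{k_m}[\psi]=\sum_n c_n\,\mathcal{S}_D^{k_m}[\varphi_n]=\sum_n c_n\,e_n^\omega$ on $\R^2\setminus\overline{D}$, keeping the first $N$ terms and packing the tail $\sum_{n>N}c_n e_n^\omega$ together with the contribution of $\mathcal{R}^\omega$ into $\mathcal{E}_N$ gives the stated decomposition.

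\emph{Main obstacle.} Step~1 is routine bookkeeping and the algebra of Step~2 is standard once the symmetrised framework is fixed. The delicate points are (i) inverting $\lambda(\omega)I-\mathcal{K}_D^*-\mathcal{R}^\omega$ uniformly for $\lambda(\omega)$ away from the finite resonant set $\{\lambda_1,\dots,\lambda_N\}$ while keeping every constant \emph{exact}, so that the formula for $\alpha_n$ is an identity and not just an asymptotic equivalence; and (ii) organising $\mathcal{R}^\omega[\psi]$ and the perturbed eigenpairs so that their cumulative effect can honestly be absorbed into $\mathcal{E}_N$ rather than contributing at the same order as the modal sum. I expect (ii) to be the real work, and it is exactly where the fine estimates of \cite{baldassari2021modal} on $\mathcal{S}_D^{k}-\mathcal{S}_D^{0}$ and on the perturbation of the Neumann--Poincar\'e spectrum are indispensable.
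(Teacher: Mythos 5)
Your outline follows essentially the same route as the paper: the paper's proof simply defers to \cite{baldassari2021modal} (Proposition 5), whose argument is precisely the single-layer ansatz of Appendix~\ref{appendix:layer}, elimination of the interior density via $(\mathcal{S}_D^{k_D})^{-1}$, and spectral decomposition of $\mathcal{K}_D^*$ in $\mathcal{H}^*(\partial D)$ that you describe, with your $f_{z^*,\mathbf{p}^*}$ being the paper's $F$. The only point to adjust in Step~2 is that the frequency corrections are not all pushed additively into $\mathcal{E}_N$: the leading one is retained in the denominator, so the announced coefficient is $\langle F,\varphi_n\rangle_{\mathcal{H}^*}/\tau_n(\omega)$ with $\tau_n(\omega)=(\varepsilon_D^{-1}-\varepsilon_m^{-1})(\lambda(\omega)-\lambda_n)+(\omega\delta c^{-1})^2\log(\omega\delta c^{-1})\,\tau_{n,1}$, and this resonance shift matters exactly when $\lambda(\omega)\approx\lambda_n$, which is the regime of interest.
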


\begin{proof}The theorem was proved in \cite[proposition 5]{baldassari2021modal}.  We recall here for the sake of completeness the expression of the coupling coefficients.
 Define $F$ as \begin{equation*}
F=-\frac{1}{\varepsilon_m}\nubf(x)^\top \mathrm{D}^2 \Gamma^{k_m}(x,z^*)\mathbf{p^*}-\frac{1}{\varepsilon_D}\left(\frac{1}{2}I-\mathcal{K}^{k_D,*}_{D}\right)\left(\mathcal{S}^{k_D}_{D}\right)^{-1}[\nabla\Gamma^{k_m}(x,z^*)\cdot\mathbf{p^*}], \quad x\in \partial D,
\end{equation*} where $\nubf(x)$ is the normal vector on $\partial D$.
We denote by $\lambda_n$ the eigenvalues of the Neumann-Poincar\'e operator on $D$ (see appendix \ref{appendix:layer}) and introduce the contrast parameter:
\begin{equation*}
\lambda(\omega):=\dfrac{\varepsilon_m+\varepsilon_D(\omega)}{2\left(\varepsilon_m-\varepsilon_D(\omega)\right)},
\end{equation*}
as well as
\begin{equation*}\tau_n(\omega)=\left(\frac{1}{\varepsilon_D}-\frac{1}{\varepsilon_m}\right)\left(\lambda(\omega)-\lambda_n\right)+\left(\omega \delta c^{-1}\right)^2 \log{\left(\omega \delta c^{-1}\right)}\tau_{n,1},
\end{equation*}
where $\tau_{n,1}$ is a perturbative spectral parameter defined in definition \ref{def:tau1}.  
Then \cite[proposition 5]{baldassari2021modal} gives us the following expansion for $u$:
\begin{align*}
u(x)=  \nabla \Gamma^{k_m}(x,z^*) \cdot \mathbf{p^*}+ \sum_{n=1}^N \frac{\left\langle F,\varphi_n\right\rangle_{\mathcal{H}^*}}{\tau_n(\omega)}\mathcal{S}_{D}^{k_m}\left[\varphi_n\right](x) + \mathcal{E}_N(x), \qquad x \in \R^2\setminus \overline{D}, N\in \N^*.
\end{align*}
Setting $\alpha_n:= \left\langle F,\varphi_n\right\rangle_{\mathcal{H}^*}/(\tau_n(\omega))$ we get the result. 
\end{proof}
\begin{remark}The coupling coefficients
\begin{align*}
\alpha_n=\frac{\left\langle F,\varphi_n\right\rangle_{\mathcal{H}^*}}{\tau_n(\omega)}
\end{align*} can become very large if $\tau_n(\omega) \ll 1$.  This will happen if $\omega$ is close to a plasmonic resonance frequency.  It is then easy to see that in the case that $\left\langle F,\varphi_n\right\rangle_{H^*}\not=0$ and $\tau_n(\omega) \ll 1$,  the field scattered by the nanoparticle can become very large and contribute to the far field as much as the radiating dipole, making it very difficult to localise the dipole, as the imaging system will be \emph{blinded} by the nanoparticle. 
\end{remark}

\begin{remark}The number of modes that have to be considered to obtain a good approximation of the field is a complicated question because the modal approximation is not uniform, neither with respect to the observation point $x$, nor with the frequency $\omega$ or the parameters of the source ($\mathbf{p}^*, z^*$).  However, in practice, only a few modes are needed to correctly describe the scattered field. This is due to the fact that:
\begin{enumerate}
\item the \emph{coupling coefficient} of the mode $n$ to the source
\begin{align*}
\alpha_n=\frac{\left\langle F,\varphi_n\right\rangle_{\mathcal{H}^*}}{\tau_n(\omega)}
\end{align*} have superpolynomial decay with respect to $n$ for a fixed source and frequency \cite[proposition 3]{baldassari2021modal},
\item high order modes do not contribute to the far field.  It has been shown in~\cite{ando2021surface} that, in the static case ($\omega=0$), the contribution of the modes outside the particle decay exponentially with the order of the mode.  It has not been demonstrated in the dynamic case $\omega \neq 0$ but there are numerical indications that the result probably holds for $\omega \neq 0$.
\end{enumerate}
So even though it is very difficult to get a quantitative estimate on the error term in theorem~\ref{theo:expansion},  in practical situations it is a good approximation. 
\end{remark}

\section{Localisation}
\label{sec:localisation}
We now introduce a method that recovers a dipole source position and orientation in the presence of a resonant nanoparticle at a low computational cost. It is based on the modification of a back-propagation/holography procedure, but the principle of the method works with any linear imaging functional. 

In the absence of a resonant structure, super-localisation is possible through the modelling of a \emph{point spread function} (PSF) of the imaging system, that maps a single point-like source to an image.
Then the location of the point source is obtained by finding the centre of the PSF, which can be done with a precision that depends only on the signal-to-noise ratio. In general, it is achieved with a least-square procedure, i.e. by fitting the centre of the PSF on the image. 

In the presence of the resonant structure, the image will not be the PSF, but the superposition of the PSF at the location of the dipole with  the image of the nanoparticle. 
The current state-of-the-art method to recover the correct position of an emitter near a nano-particle requires solving the forward problem (the full-wave Maxwell equations) for each position of the source \cite{fu2017super}.  This is very costly in terms of computation, making it impossible to apply in practical situations where thousands of isolated single emitters have to be localised quickly. 

The idea behind our method is that even though the field scattered by the nanoparticle cannot be known in advance (as it depends on both the position and the orientation of the dipole),  it can be expressed as a linear combination of  pre-computable quantities (the modes).  This suggests that the distortion of the PSF due to the presence of the nanoparticle is simply a linear combination of pre-computable artefacts.
Therefore, instead of performing the least-square procedure on the position of the centre of the PSF, we optimise on both the centre of the PSF and the coefficients of the linear combination of artefacts.

\subsection{Back-propagation in the absence of a resonant structure}
Super-localisation of the fluorescent molecule is obtained by back-propagating $u(x)$, for $|x|$ large enough, and maximising the back-propagation function.
The precision of the localisation is limited by the signal-to-noise ratio (SNR).

We introduce the following vector-valued imaging functional.
\begin{definition}\label{def:I}
\begin{align*}
\mathcal{I}(z)=\int_{\partial B_R} \overline{\nabla_z \Gamma^{k_m}(x,z)} u(x) \dd \sigma(x), \quad z\in \Omega,
\end{align*}
where the imaging domain $\Omega$ is such that 
\begin{align*}
D\Subset \Omega \Subset B(0,R).
\end{align*}

\end{definition}
We then include a lemma from \cite{ammari2014backpropagation} with its proof for sake of completeness.
\begin{lemma}\label{lem:HK}
Let \begin{align*}
\mathcal{R}(y,z) = \int_{\partial B_R} \overline{\nabla_y \Gamma^{k_m}(x,y)}\nabla_z \Gamma^{k_m}(x,z)^\top \dd \sigma(x), \qquad (y,z)\in \R^2.
\end{align*}
Then \begin{align*}
\lim_{R\rightarrow\infty} \mathcal{R}(z,z) = \frac{k_m}{8} I_2,
\end{align*} where $I_2$ is the identity matrix.
\end{lemma}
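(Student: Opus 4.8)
The plan is to evaluate $\mathcal{R}(z,z)$ directly from the large-argument behaviour of the two-dimensional outgoing Green function, the only subtlety being uniformity of the asymptotics on the expanding circle $\partial B_R$. Since $\Gamma^{k_m}(x,z)=-\frac{i}{4}H_0^{(1)}(k_m|x-z|)$ and $(H_0^{(1)})'=-H_1^{(1)}$, differentiation in $z$ gives
\begin{equation*}
\nabla_z\Gamma^{k_m}(x,z)=\frac{ik_m}{4}\,H_1^{(1)}\!\big(k_m|x-z|\big)\,\frac{z-x}{|x-z|}.
\end{equation*}
Fixing $z$ in a compact set (say $\overline{\Omega}$) and writing $\hat x=x/|x|$ for $x\in\partial B_R$, one has the uniform expansions $|x-z|=R-\hat x\cdot z+O(R^{-1})$ and $(z-x)/|x-z|=-\hat x+O(R^{-1})$; together with $H_1^{(1)}(t)=\sqrt{2/(\pi t)}\,e^{i(t-3\pi/4)}+O(t^{-3/2})$ this yields
\begin{equation*}
\nabla_z\Gamma^{k_m}(x,z)=-\frac{ik_m}{4}\sqrt{\frac{2}{\pi k_m R}}\;e^{i(k_m|x-z|-3\pi/4)}\,\hat x\;+\;O(R^{-3/2}),
\end{equation*}
uniformly on $\partial B_R$.

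Forming the outer product $\overline{\nabla_z\Gamma^{k_m}(x,z)}\,\nabla_z\Gamma^{k_m}(x,z)^\top$, the scalar prefactors multiply to $\frac{k_m^2}{16}\cdot\frac{2}{\pi k_m R}=\frac{k_m}{8\pi R}$, the two conjugate exponentials cancel (the phase is identical in both factors, since $y=z$), and $\hat x$ is real, so
\begin{equation*}
\overline{\nabla_z\Gamma^{k_m}(x,z)}\,\nabla_z\Gamma^{k_m}(x,z)^\top=\frac{k_m}{8\pi R}\,\hat x\,\hat x^\top+O(R^{-2}),
\end{equation*}
the $O(R^{-2})$ arising from the cross terms between the leading $O(R^{-1/2})$ factor and the $O(R^{-3/2})$ remainder. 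Parametrising $\partial B_R$ by $x=R(\cos\theta,\sin\theta)$, so that $d\sigma(x)=R\,d\theta$ and $\int_0^{2\pi}\hat x\,\hat x^\top\,d\theta=\pi I_2$, we obtain $\mathcal{R}(z,z)=\frac{k_m}{8\pi R}\cdot R\cdot\pi I_2+O(R^{-1})=\frac{k_m}{8}I_2+O(R^{-1})$, and letting $R\to\infty$ proves the claim.

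The only real work is keeping the expansions of $H_1^{(1)}$ and of the geometric quantities uniform in $x\in\partial B_R$ and — since arc length on $\partial B_R$ carries an extra factor $R$ — checking that everything beyond the leading $O(R^{-1})$ term contributes $o(1)$ after integration; this is routine but must be done with care. A cleaner, if less self-contained, alternative is the Helmholtz–Kirchhoff route: apply Green's second identity in $B_R$ to $u=\partial_{z_j}\Gamma^{k_m}(\cdot,z)$ and $v=\overline{\partial_{y_i}\Gamma^{k_m}(\cdot,y)}$ (which solve $(\Delta+k_m^2)u=-\partial_{x_j}\delta_z$ and $(\Delta+k_m^2)v=-\partial_{x_i}\delta_y$ and satisfy the radiation condition), and on $\partial B_R$ replace $\partial_\nu u\approx ik_m u$, $\partial_\nu v\approx -ik_m v$ via the Sommerfeld condition; this gives $\mathcal{R}(y,z)\to\frac{1}{k_m}\,\Im\big(D^2\Gamma^{k_m}(y-z)\big)$. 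Since $\Im\Gamma^{k_m}(w)=-\frac14 J_0(k_m|w|)$ is smooth, the expansion $J_0(t)=1-t^2/4+O(t^4)$ gives $\Im\big(D^2\Gamma^{k_m}(0)\big)=\frac{k_m^2}{8}I_2$, whence again $\mathcal{R}(z,z)\to\frac{k_m}{8}I_2$. This version trades the uniformity bookkeeping for the (standard) justification of the vector Green identity with dipole sources, which is the main obstacle on that path.
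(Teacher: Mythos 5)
Your proof is correct, and your main argument takes a genuinely different route from the paper's. The paper invokes the Helmholtz--Kirchhoff identity to write $\lim_{R\to\infty}\mathcal{R}(y,z)$ as $\tfrac{1}{k_m}$ times the mixed Hessian of $\Im\,\Gamma^{k_m}$, computes that Hessian explicitly in terms of $J_0$ and $J_1$ for all $y,z$, and then evaluates at $y=z$ (where one must observe that the direction-dependent rank-one terms cancel, since $k_mJ_0(k_mr)-2J_1(k_mr)/r\to 0$ as $r\to 0$); your first argument instead evaluates $\mathcal{R}(z,z)$ directly from the uniform large-argument asymptotics of $H_1^{(1)}$ on $\partial B_R$, reducing everything to $\int_0^{2\pi}\hat x\,\hat x^\top\,\mathrm{d}\theta=\pi I_2$. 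Your computation is self-contained and elementary, and the error bookkeeping ($O(R^{-2})$ pointwise times circumference $O(R)$ gives $o(1)$) is exactly the right check; it is entirely adequate for the lemma as stated, which only concerns the diagonal value. What the paper's route buys in exchange is the full off-diagonal kernel $\mathcal{R}(y,z)$ and its $|y-z|^{-1/2}$ decay, which is what is actually used downstream as the point spread function in the least-squares fit \eqref{eq:minimisationnocorrection}; your direct method would recover this too, but only at the cost of evaluating the oscillatory integral $\int e^{ik_m\hat x\cdot(y-z)}\hat x\,\hat x^\top\,\mathrm{d}\theta$, which lands you back at the same Bessel-function expression. Your closing sketch of the Helmholtz--Kirchhoff alternative is consistent with the paper (including the sign: $\nabla_y\nabla_z^\top$ of a function of $y-z$ is minus the Hessian, which reconciles the paper's leading minus sign with your $+\tfrac{1}{k_m}\Im\bigl(D^2\Gamma^{k_m}\bigr)$, and both agree with the positive-semidefiniteness of the Gram matrix $\mathcal{R}(z,z)$).
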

\begin{proof}
We start from the Helmholtz-Kirchhoff theorem \cite{ammari2013mathematical} \begin{align*}
\lim_{R\rightarrow\infty} \mathcal{R}(y,z) = -\frac{1}{k_m} \nabla_y \nabla_z \Im\, \Gamma^{k_m} (y,z),
\end{align*} where $\Im$ is the imaginary part.
We can then compute an approximation of $\mathcal{R}$:
\begin{multline*}
\frac{1}{k_m} \nabla_y \nabla_z \Im\, \Gamma^{k_m} (y,z) = \frac{1}{4}\Bigg[ k_m J_0\left(k_m \vert y-z\vert\right) \frac{\left(y-z\right)\left(y-z\right)^\top}{\vert y-z\vert^2}\\- 2\frac{J_1\left(k_m \vert y-z\vert\right)}{\vert y-z\vert} \frac{\left(y-z\right)\left(y-z\right)^\top}{\vert y-z\vert^2} +\frac{J_1\left(k_m \vert y-z\vert\right)}{\vert y-z\vert} I_2  \Bigg].
\end{multline*}
One can see that $\mathcal{R}(y,z)$ decays as $\vert y-z\vert^{-1/2}$ and the imaging functional has a peak at $y=z$. Evaluating at $y=z$ gives the result.
\end{proof}

It is easy to see from lemma \ref{lem:HK} that in the absence of the resonant structure, $\mathcal{I}(z)$ would have an amplitude peak at $z=z^*$ the position of the dipole source and its orientation would be parallel to $\mathbf{p^*}$.
To get $z^*$ and $\mathbf{p^*}$ one needs to solve the following optimisation problem in $\R^4$:
\begin{align}
(z^*,\mathbf{p^*}) = \mathrm{argmin}_{z,\pbf} \left\Vert \mathcal{I}(\cdot) - \mathcal{R}(\cdot, z)\pbf\right\Vert_{L^2(\Omega)}.
\label{eq:minimisationnocorrection}
\end{align}

\subsection{Back-propagation in the presence of the resonant structure}\label{sec:numerics}

\begin{proposition}
In the presence of the nanoparticle, the imaging functional has the form:
\begin{align*}
\mathcal{I}(z)= \mathcal{R}(z,z^*)\mathbf{p^*} + \sum_{n=1}^N \alpha_n I_{e_n}(z) + I_{\mathcal{E}_N}(z), 
\end{align*}
where $\alpha_n$ are the (unknown) coupling coefficients defined in theorem~\ref{theo:expansion},  $I_{e_n}$ the (known) image by the imaging functional of the mode $n$ and $ I_{\mathcal{E}_n}(z)$ the (unknown) image of the error term.
\end{proposition}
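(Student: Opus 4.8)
### Proof proposal

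The plan is to simply substitute the modal expansion of $u$ from Theorem~\ref{theo:expansion} into the definition of the imaging functional $\mathcal{I}$ and use the linearity of the integral over $\partial B_R$. First I would write, for $x \in \partial B_R$ (which lies in $\R^2 \setminus \overline{D}$ since $D \Subset B(0,R)$, so the expansion is valid there),
\begin{align*}
u(x) = \nabla \Gamma^{k_m}(x,z^*) \cdot \mathbf{p^*} + \sum_{n=1}^N \alpha_n e^\omega_n(x) + \mathcal{E}_N(x).
\end{align*}
Plugging this into $\mathcal{I}(z) = \int_{\partial B_R} \overline{\nabla_z \Gamma^{k_m}(x,z)}\, u(x)\, \dd\sigma(x)$ and distributing the integral across the three terms gives three contributions.

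For the first term, $\int_{\partial B_R} \overline{\nabla_z \Gamma^{k_m}(x,z)}\, \big(\nabla_x \Gamma^{k_m}(x,z^*) \cdot \mathbf{p^*}\big)\, \dd\sigma(x)$, I would recognise the kernel as exactly $\mathcal{R}(z,z^*)$ from Lemma~\ref{lem:HK}: writing $\nabla_x \Gamma^{k_m}(x,z^*)\cdot\mathbf{p^*} = \big(\nabla_x \Gamma^{k_m}(x,z^*)\big)^\top \mathbf{p^*}$ and using $\nabla_y \Gamma^{k_m}(x,y) = -\nabla_x \Gamma^{k_m}(x,y)$ (or, depending on the sign convention adopted for $\mathcal{R}$ in Lemma~\ref{lem:HK}, noting the definition already matches), this term is precisely $\mathcal{R}(z,z^*)\mathbf{p^*}$. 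For the second term, I define $I_{e_n}(z) := \int_{\partial B_R} \overline{\nabla_z \Gamma^{k_m}(x,z)}\, e^\omega_n(x)\, \dd\sigma(x)$; since $e^\omega_n = \mathcal{S}_D^{k_m}[\varphi_n]$ is a fixed, source-independent function, $I_{e_n}$ is a known (pre-computable) quantity, and linearity pulls the constants $\alpha_n$ out of the integral to give $\sum_{n=1}^N \alpha_n I_{e_n}(z)$. For the third term, I set $I_{\mathcal{E}_N}(z) := \int_{\partial B_R} \overline{\nabla_z \Gamma^{k_m}(x,z)}\, \mathcal{E}_N(x)\, \dd\sigma(x)$, which inherits its dependence on $z^*, \mathbf{p^*}, D, N$ from $\mathcal{E}_N$. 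Summing the three gives exactly the claimed formula.

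There is essentially no analytical obstacle here: the result is a one-line consequence of linearity of the functional together with Theorem~\ref{theo:expansion}. The only points requiring a word of care are (i) checking that $\partial B_R \subset \R^2 \setminus \overline{D}$ so that the pointwise modal expansion of Theorem~\ref{theo:expansion} applies on the integration contour — this is guaranteed by $D \Subset \Omega \Subset B(0,R)$; (ii) justifying the interchange of the finite sum with the integral, which is immediate as the sum is finite and each $e^\omega_n$ is smooth (indeed real-analytic) away from $\partial D$, hence continuous on the compact contour $\partial B_R$; and (iii) confirming the matrix/vector bookkeeping in the first term matches the definition of $\mathcal{R}$ in Lemma~\ref{lem:HK} — the conjugate and the outer-product structure $\overline{\nabla_y \Gamma}\,(\nabla_z \Gamma)^\top$ are set up so that contraction against $\mathbf{p^*}$ reproduces $\mathcal{R}(z,z^*)\mathbf{p^*}$. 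The substantive content (the decomposition of $u$, the asymptotics of $\mathcal{R}$) has already been established upstream, so this proposition is purely organisational, setting the stage for the modified least-squares problem in the next subsection.
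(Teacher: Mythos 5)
Your proposal is correct and follows exactly the paper's argument: substitute the modal expansion of Theorem~\ref{theo:expansion} into the definition of $\mathcal{I}$, use linearity to split the integral into three pieces, identify the first with $\mathcal{R}(z,z^*)\mathbf{p^*}$ via the kernel defined in Lemma~\ref{lem:HK}, and define $I_{e_n}$ and $I_{\mathcal{E}_N}$ as the images of the modes and of the error term. Your additional remarks on the validity of the expansion on $\partial B_R$ and on the matrix/vector bookkeeping are sound but do not change the route.
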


\begin{proof}
The result is a direct consequence of theorem \ref{theo:expansion} and the linearity of the imaging functional. Let
\begin{align*}
I_{e_n}(z)= \int_{\partial B_R} \overline{\nabla_z \Gamma^{k_m}(x,z)} e_n(x) \dd \sigma(x),
\end{align*} and
\begin{align*}
I_{\mathcal{E}_N}(z)= \int_{\partial B_R} \overline{\nabla_z \Gamma^{k_m}(x,z)} \mathcal{E}_N(x) \dd \sigma(x).
\end{align*} We obtain the result by applying lemma \ref{lem:HK}.
\end{proof}

To recover $z^*$ and $\mathbf{p^*}$ in the presence of the resonant structure,  we now solve an optimisation problem in $\R^{4+N}$:
\begin{align}\label{eq:minwithcorrection}
\left(z^*,\mathbf{p^*},(\alpha_n^*)_{n=1}^N \right) = \mathrm{argmin}_{z,\pbf,(\alpha_n)_{n=1}^N} \left\Vert \mathcal{I}(\cdot) - \mathcal{R}(\cdot, z)\pbf-\sum_{n=1}^N\alpha_n I_{e_n}(z) \right\Vert^2_{L^2(\widetilde{\Omega})}.
\end{align}

\section{Numerical simulations}
\label{sec:num}
In this section we illustrate the method's efficiency with numerical simulations.
We perform a virtual experiment where a dipole source is placed near a metallic nanoparticle whose permittivity is described by a Drude model.  We consider nanoparticles with various shapes and subject to noise to illustrate the robustness of the method.

\subsection{Geometry and physical parameters}
Throughout this section, we consider the three domains sketched on Figure~\ref{fig:domains} to illustrate our results. The five-petal flower~(a) is defined by $\varrho=\delta(2+0.6\cos(5\theta))$ in polar coordinates. The rounded diamond~(b) is defined by the parametric curve $\zeta(\theta)=2\delta\left(e^{i\theta}+0.066e^{-3i\theta}\right)$, for $\theta\in[0,2\pi]$. The narrow ellipse~(c) semi-axes are on the $x_1$- and $x_2$- axes and are of length $a=1 \delta$  and $b=5\delta$, respectively. 


		\begin{figure}[H]
		\centering
			\includegraphics[trim={1.5cm 4.5cm 0.4cm 4cm},clip,width=15cm]{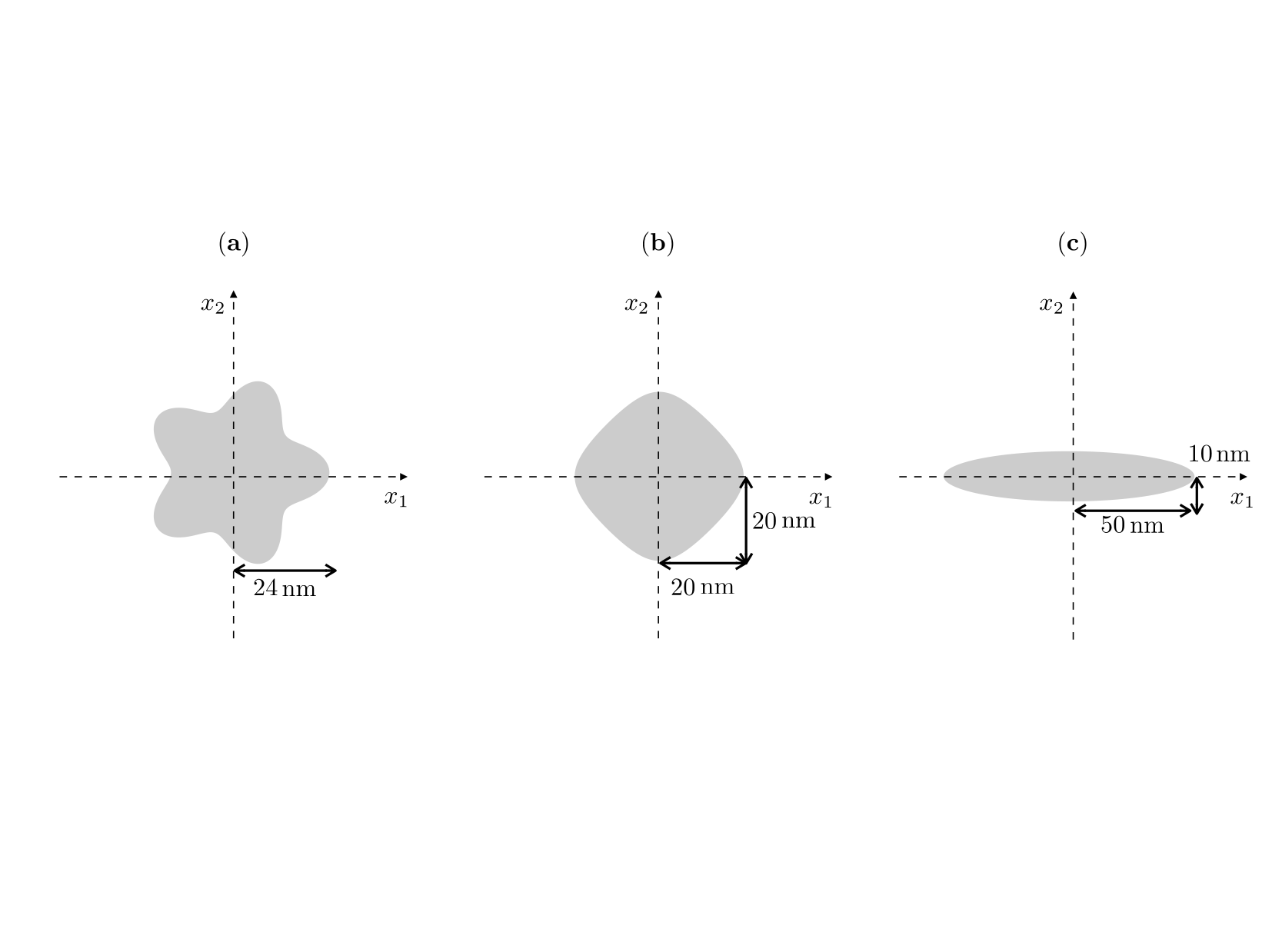} 
		\caption{Sketch of the three reference domains: the five-petal flower (a), the rounded diamond (b) and the narrow ellipse (c).}
		\label{fig:domains}
	\end{figure}	
We recall the permittivity inside the particle is described by a Drude model
\begin{align*}
\varepsilon_D(\omega) =\varepsilon_0\left( 1-\frac{\omega_p^2}{\omega\left(\omega +i\tau^{-1}\right)}\right).
\end{align*}
The physical parameters are chosen as $\omega_p=2\cdot 10^{15}$ Hz, $T=10^{-14}$s, $\varepsilon_0=8.854187128 \cdot 10^{-12}$ Fm$^{-1}$, $\mu_0=4\pi \cdot 10^{-7}$ Hm$^{-1}$ and $\delta=10^{-8}$ m. 
%
%
\subsection{Modes and resonances}
For each shape, we compute a  list of $N$ plasmonic resonant frequencies $(\omega_n)_{n\in [1,N]}$	and the associated modes. 
We give an illustration for the rounded diamond.
\begin{figure}[H]
\centering
			\includegraphics[,clip,width=\linewidth]{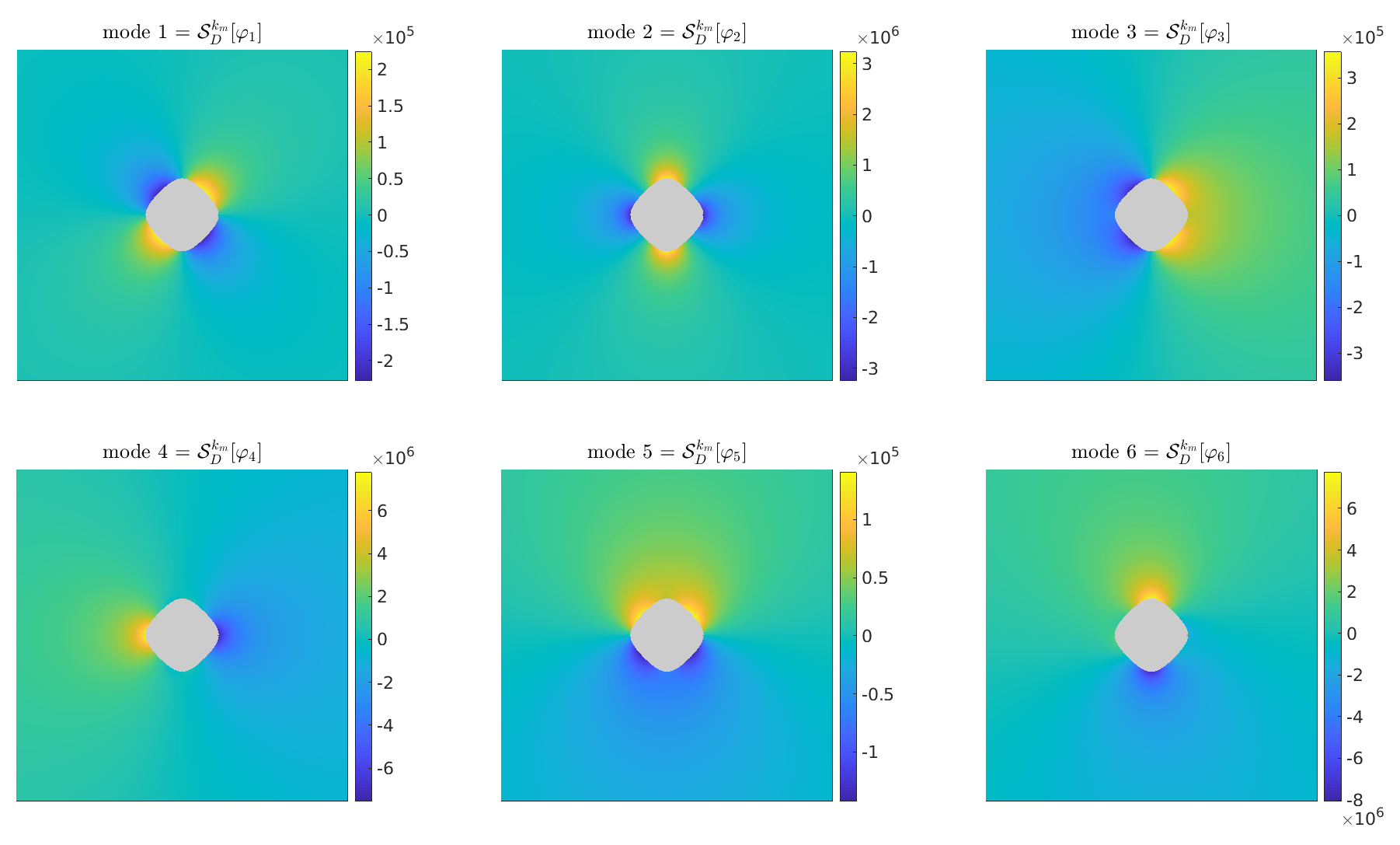} 		
			\caption{First $6$ modes for the rounded diamond,  corresponding to $\omega=1.5050 \cdot 10^{15}$ Hz.}
\end{figure}

\subsection{Imaging experiment}

For each shape we pick a resonant frequency $\omega_n$ and place an oscillating dipole at frequency $\omega_n$ near the nanoparticle.  We choose the frequency that corresponds to a mode that has a dipole radiation pattern, to maximise the effect of the nanoparticle's presence on the far-field image and make the localisation of the dipole source as hard as possible. 

The scattered field (solution of \eqref{eq:u}) is then computed with a system of boundary integral equations (that uses layer potential techniques) detailed in appendix \ref{appendix:layer}.
The boundary of the nanoparticle is discretised with $2^8$ points.

The  field (amplitude and phase) is then collected in the far field on a disk of radius 3000$\delta$.
The measured field is then back-propagated using the imaging functional introduced in definition \ref{def:I}.

\subsection{Localisation by optimisation and mirage}
The localisation (without correction) of the particle is done by fitting a focal spot on the imaging functional, as described in equation~\eqref{eq:minimisationnocorrection}.
The optimisation is done using the \emph{fmincon} MATLAB\textsuperscript{\textregistered} function.  We constrain the search to a $300\times 300 \text{nm}^2$ square. The position of the initial guess does not influence the result. 

For the corrected functional, we use the \emph{fmincon} function to search for a solution of equation~\eqref{eq:minwithcorrection}. The number of modes to use for the correction will be discussed later in this section. 

We show, on Figure~\ref{fig:mirage_flo_dia}, examples of the results of the optimisation procedure for the three different domains.
		\begin{figure}[H]
		\centering
			\includegraphics[trim={9cm 17cm 6cm 3cm},clip,width=16cm]{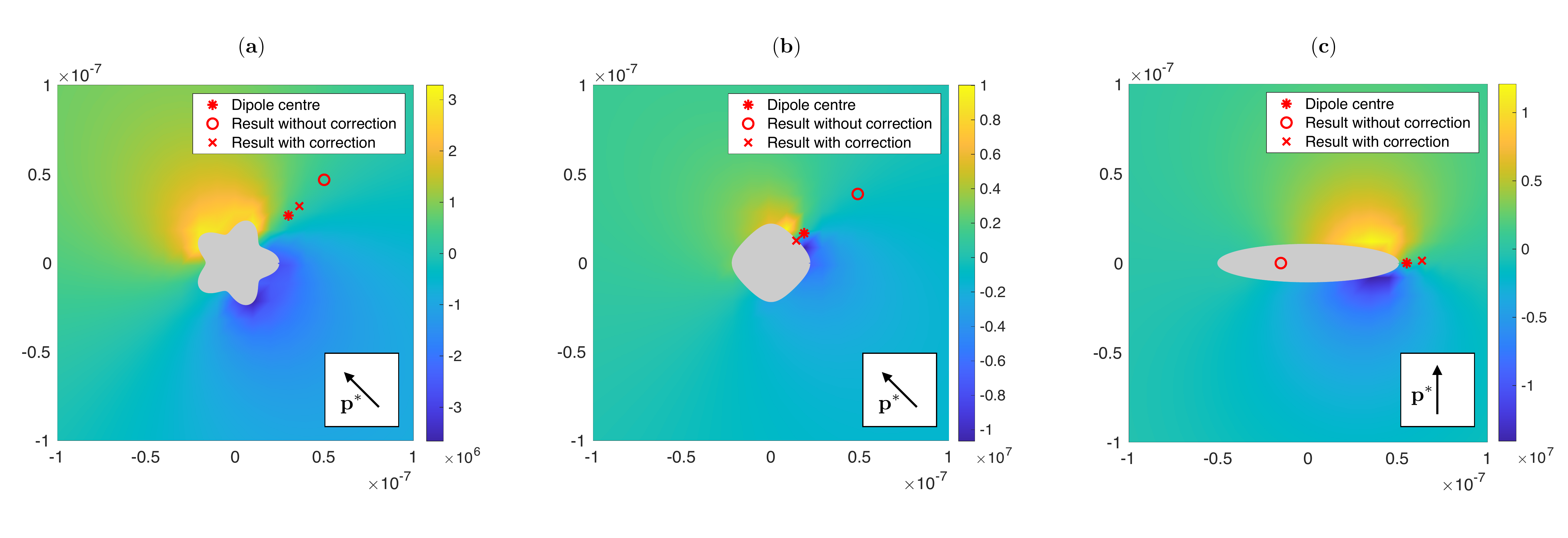} 
		\caption{Mirage and optimisation on the three domains: for the flower (a), the localisation error is $29$nm before correction and $8$nm. For the diamond (b), the error is $37$nm before correction and $5.6$nm. The dipole direction is $(-1,1)$. For the ellipse (c), the localisation error is $71$nm before correction and $8$nm after and the dipole direction is $(0,1)$.}
		\label{fig:mirage_flo_dia}
	\end{figure}

\subsection{Sensitivity analysis}

On Figure~\ref{fig:err} we plot the error on the localisation (a) and orientation (b) of the dipole with respect to the distance between the emitter and the nanoparticle. We initially place the dipole at $z^*= [18.65,16.65]$nm and fix its orientation ($\mathbf{p}^*=(-1,1)$). We gradually increase the distance to the nanoparticle by translating the dipole along a radial component of a disk centred at the origin.
As expected, the uncorrected functional does not behave well when the source is close to the nanoparticle. Note that the error is still one order of magnitude below the usual diffraction limit.  The localisation error on the corrected functional is one order of magnitude lower than the uncorrected one. The correction is a massive improvement if the emitter is close enough to excite the nanoparticle. When the emitter is far away and the coupling is weaker, both procedures have similar performances. 
\begin{figure}[H]
	\centering
	\includegraphics[trim={1cm 0cm 1.5cm 0cm},clip,width=7.5cm]{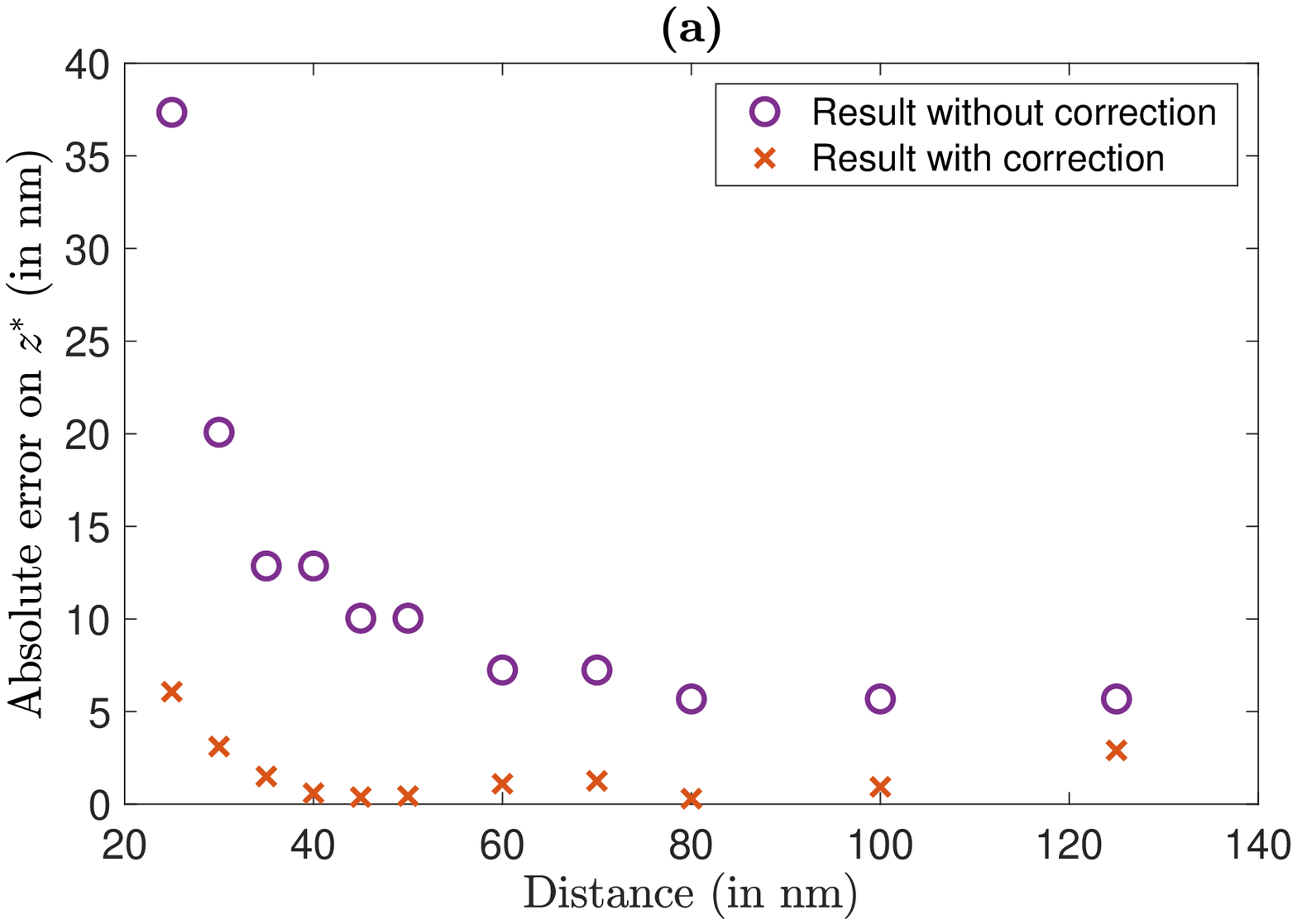} 
		\includegraphics[trim={1cm 0cm 1.5cm 0cm},clip,width=7.5cm]{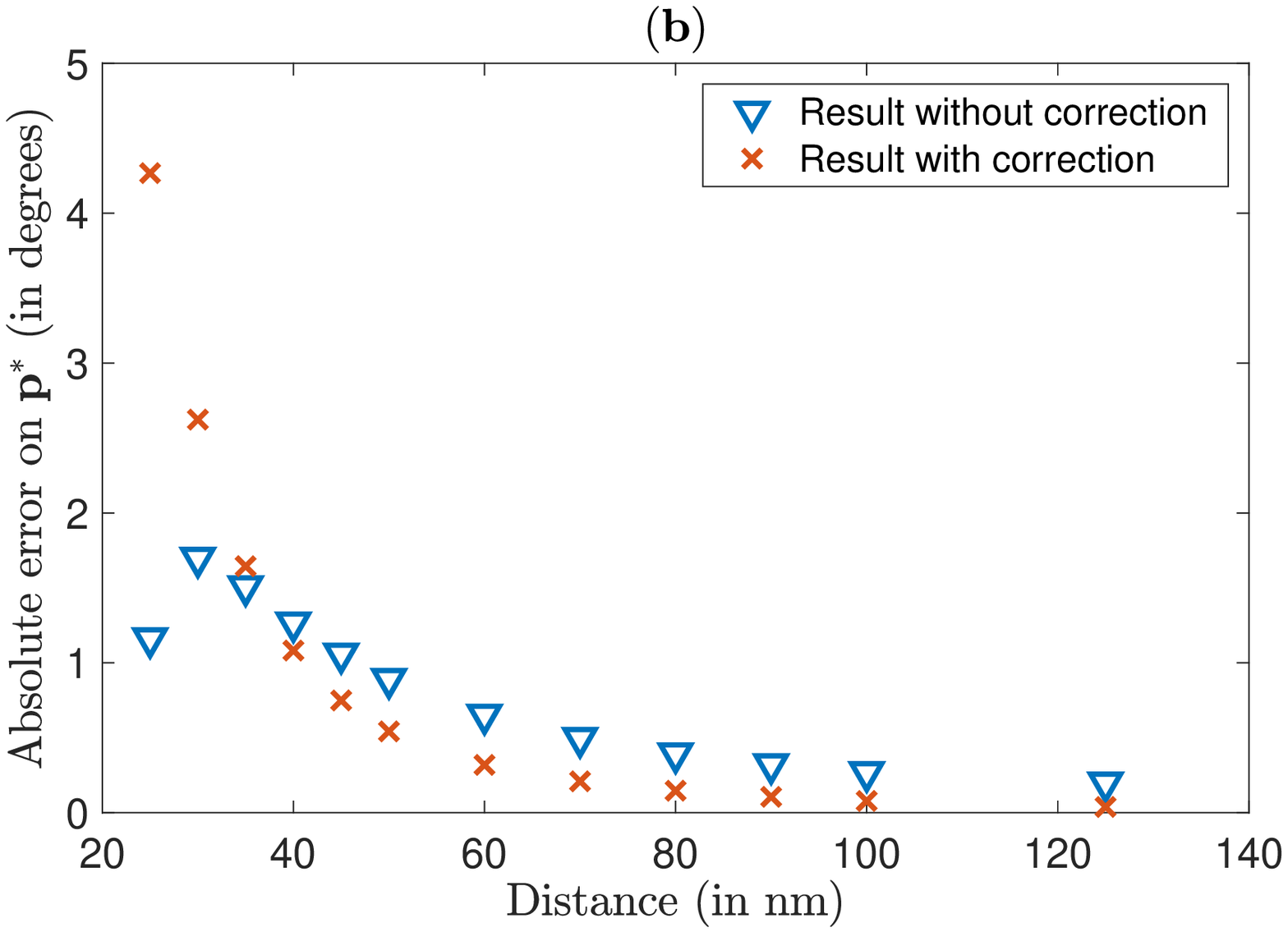} 
	\caption{(a) Absolute error on the position $z^*$ in nanometers with (orange crosses) and without (purple circles) correction against the distance between the emitter and the rounded diamond boundary. (b) Absolute error on the orientation $\mathbf{p}^*$ in degrees with (orange crosses) and without (blue triangles) correction against the distance between the emitter and the rounded diamond boundary. The initial position is $z^*= [18.65,16.65]$nm. The dipole direction is $(-1,1)$.}
	\label{fig:err}
\end{figure}

\subsection{Stability with respect to measurement noise}
We compute the imaging functional with a set of data perturbed by Gaussian white noise. We consider the case in which the measured field $u(x)$ is corrupted by an additive noise $\nu(x)$, $x \in \partial B_R$.
We assume that $\partial B_R$ is covered uniformly with sensors and that the additive noises $\nu(x)$ have independent and identically distributed $\mathcal{N}(0,\sigma_\text{noise})$ entries. Hence, the entries of $\nu(x)$ are
independent Gaussian random variables with mean zero and variance 
$$
\sigma_\text{noise}= \sigma_0 \| u \|_F/\sqrt{N},  $$
where $||~||_F$ is the Frobenius norm, $\sigma_0$ the noise percentage and $N$ the number of sensors.

For each noise level, we average the results over $100$ realisations. Figure~\ref{fig:mirage_flo_noise} presents the results of computational experiments in the case in which the nanoparticle is the diamond. It shows that the corrected imaging functional performs well at high levels of noise, which means that the corrected imaging functional is robust with respect to additive measurement noise. 
	
			\begin{figure}[H]
		\centering
			\includegraphics[trim={0.7cm 0cm 1.5cm 0cm},clip,width=7.7cm]{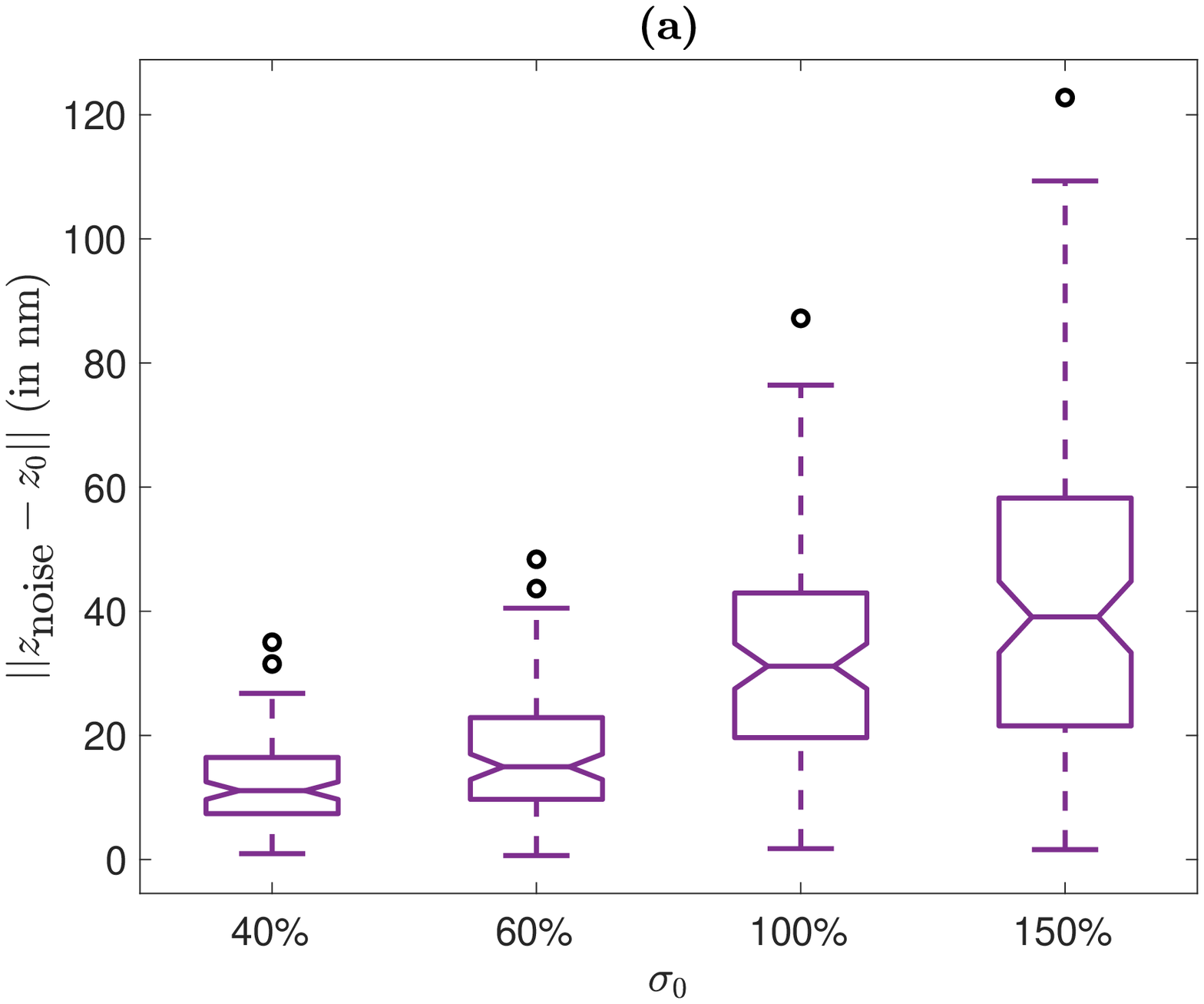} 
			\includegraphics[trim={.7cm 0cm 1.5cm 0cm},clip,width=7.7cm]{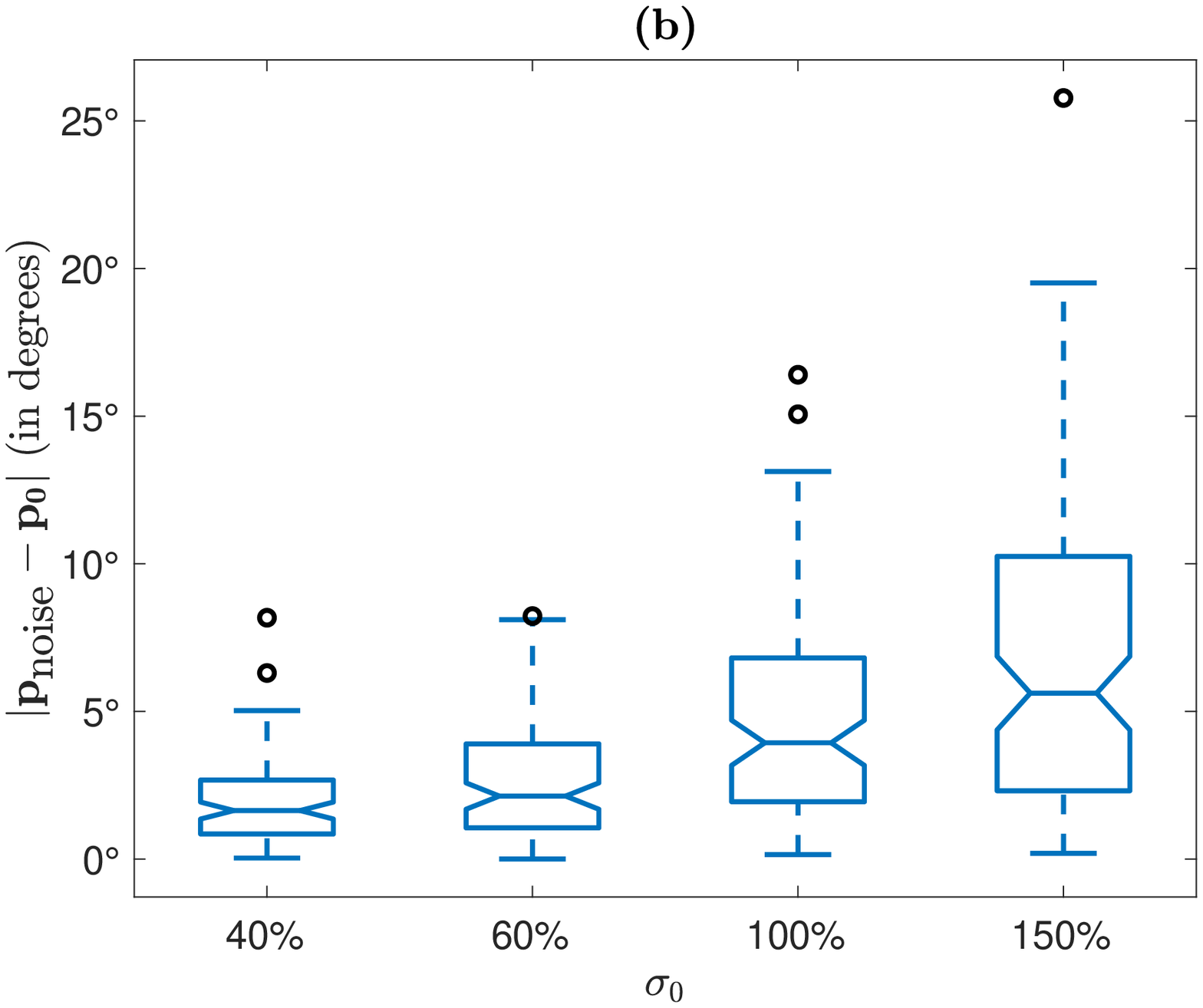} 
		\caption{Boxplots showing the error on the position (a) and orientation (p) for noise levels $\sigma_0$ varying from $40\%$ to $150\%$. The boxplots show the median, 25- and 75-quartiles, the notches correspond to the 95 confidence interval for 100 realisations. The outliers are shown as black symbols.}
		\label{fig:mirage_flo_noise}
	\end{figure}

\subsection{Number of modes}
For the numerical experiments of this section we have assumed to know in advance the pre-computable artefacts that describe the distortion of the PSF due to the presence of the nanoparticle, \emph{i.e.}, the modes excited by the fluorescent molecule. In practice, this may not the case. However, we know that only a few modes are excited and contribute significantly to approximate the scattered field. One can ask whether optimising over excited \emph{and} unexcited modes will give results as good as the ones obtained by optimising over the excited modes only. 

For these simulations we consider the rounded diamond. The dipole position is $[18.65,16.65]$nm. The initial direction is $(-1,1)$. In this case, we can check numerically that only modes 4 and 6 suffice to approximate the field.  

On Table~\ref{tab:1}, we show the error on the position and orientation for the first $N$ modes. As expected, the error becomes small as soon as $N\geq 6$, \emph{i.e.}, modes 4 and 6 are considered, regardless of the number of unexcited modes used for the optimisation procedure.    
	
\begin{table}[h!]
	\centering
	\begin{tabular}{|c||c|c|c|c|c|c|}
		\hline 
		$N~\text{modes}$ & 2 &  3 & 4 & 5 & 6 &  7  \\
		\hline \hline 
		$||\bf{p}_N-\bf{p}^*||~\text{(in deg)}$  & 0.57 & 89.2  & 13.9 & 45.0 & 0.57 &  0.30  \\
		\hline
		$||z_N-z^*||~\text{(in nm)}$ & 116 &  145 & 116&  166 & 4.74 &  4.22  \\
		\hline
	\end{tabular}
	\caption{Error on the position and orientation for the first $N$ modes, $N=2..7$. }
	\label{tab:1}
\end{table}

\section{Concluding remarks}
\label{sec:con}
In this paper we have shown a new method to recover the localization of a single isolated source in a resonant environment with a low computational cost.  Using the results of \cite{baldassari2021modal,AMMARI2022676} and pre-computing the modes of the resonant structure, we transform the  \emph{inverse source problem for the wave equation} into a simple minimization in $\R^d$ with $d$ being the number of modes to consider, usually less than $10$.  We can recover the position of the source with a precision that is two orders of magnitude below the usual diffraction limit.   Since most of the computational cost is the pre-computation of the modes, the position of many single sources can be recovered very quickly, as is needed in real \emph{PALM} or \emph{STORM} experiments, where the position of thousands of emitters are needed to produce an image.  This paper acts like a proof of principle for this method.  In a forthcoming paper,  we will adapt the numerics for the $3D$ Maxwell case that is more suited for the applications, and consider a more complex structured environment as well as amplitude only measurements.

\pagebreak
\appendix

\section{Layer potentials}\label{appendix:layer}
The results presented in this section are all classical and can be found in the books \cite{AKL,ammari2013mathematical}. 
\subsection{Operator}
For $k\geq 0$, a fundamental solution to the Helmholtz operator $\Delta +k^2$ is given by
\begin{equation*}
\Gamma^{k}(x)=
\begin{dcases}
\frac{1}{2\pi}\ln|x| &\mbox{if }k=0, \\
-\frac{i}{4}H_0^{(1)}{(k|x|)} &\mbox{if }k>0,
\end{dcases}
\end{equation*}
for $x\neq 0$, where $H_0^{(1)}$ is the well-known Hankel function of the first kind and order zero.
Let $\mathcal{S}^k_D$ be the single layer potential, defined by
\begin{equation}\label{eq:singlelayer}
\mathcal{S}^k_D[\phi](x)=\int_{\partial D}\Gamma^k(x,y)\phi(y)\mathrm{d}\sigma(y),\qquad x\in \R^2,
\end{equation}
for $\phi\in L^2(\partial D)$.
We also define the Neumann-Poincar\'e operator by
\begin{equation}\label{eq:k*}
\mathcal{K}^{k,*}_D[\phi](x)=\int_{\partial D}\frac{\partial \Gamma^k(x,y)}{\partial \nu(x)}\phi(y)\mathrm{d}\sigma(y),\qquad x\in \partial D,
\end{equation}
for $\phi\in L^2(\partial D)$, where $\partial /\partial \nu(x)$ denotes the outward normal derivative at $x\in \partial D$.
When $k=0$, we omit the superscript and write $\mathcal{S}_D$ and $\mathcal{K}^{*}_D$ for simplicity. Let $\left\langle \cdot,\cdot\right\rangle_{-1/2,1/2}$ be the duality pairing between $H^{-\frac{1}{2}}(\partial D)$ and $H^{\frac{1}{2}}(\partial D)$, where $H^{\frac{1}{2}}(\partial D)$ is the Sobolev space of order 1/2.
The single layer potential is, in general, not invertible in $L^2(\partial D)$. Let us introduce 
\begin{equation*}
\widetilde{\mathcal{S}}_D[v] = 
			\begin{cases}
			\mathcal{S}_D[v] & \text{if } \langle v, \chi(\partial D)\rangle_{-\frac{1}{2}, \frac{1}{2}} = 0,\\
			-\chi(\partial D) & \text{if } v = \varphi_0,
			\end{cases}
\end{equation*}
with $\varphi_0$ being the unique (in the case of a single particle) eigenfunction of $\mathcal{K}^*_D$ associated with eigenvalue $1/2$ such that $\langle \varphi_0, \chi(\partial D)\rangle_{-\frac{1}{2}, \frac{1}{2}}=1$.
The operator $\mathcal{K}^{*}_D:H^{-1/2}(\partial D)\rightarrow H^{-1/2}(\partial D)$ is compact and the following Plemelj's symmetrisation principle identity (also
known as Calder\'on) holds on $H^{-1/2}(\partial D)$:
\begin{equation*}
\widetilde{\mathcal{S}}_D\mathcal{K}^*_D=\mathcal{K}_D\widetilde{\mathcal{S}}_D.
\end{equation*}
Let $\mathcal{H}^*(\partial D)$ be the space $H^{-1/2}(\partial D)$ equipped with the following inner product:
\begin{equation*}
<u,v>_{\mathcal{H}^*(\partial D)}=-<\widetilde{\mathcal{S}}_D[v],u>_{1/2,-1/2}.
\end{equation*}
The Neumann-Poincar\'e operator $\mathcal{K}^*_D$ is self-adjoint in the Hilbert space $\mathcal{H}^*(\partial D)$. Let $(\lambda_n,\varphi_n)_{n\in\N}$ be the eigenvalue and normalised eigenfunction pair of $\mathcal{K}^*_D$ in $\mathcal{H}^*(\partial D)$. Then $\lambda_0=1/2$, $-1/2<\lambda_n<1/2$ for $n\geq1$ and $\lambda_n\rightarrow 0$ as $n\rightarrow+\infty$.

\subsection{Solution for the wave equation}
Let $H^{1/2}(\partial D)$ be the usual Sobolev space and let $H^{-1/2}(\partial D)$ be its dual space with respect to the duality pairing $\left\langle \cdot,\cdot\right\rangle_{\frac{1}{2},-\frac{1}{2}}$. The field $u$ can be represented using the single-layer potentials $\mathcal{S}^{k_c}_D$ and $\mathcal{S}^{k_m}_D$, introduced in equation \eqref{eq:singlelayer}, as follows:
	\begin{equation} \label{eq:scalar_solution}
	u(x)=\begin{dcases}
	\mathcal{S}^{k_c}_D[\Phi](x), & x\in D,\\
	u^\text{in}(x)+\mathcal{S}^{k_m}_D[\Psi](x), & x\in \mathbb{R}^d \setminus \overline{D},
	\end{dcases}
	\end{equation}
	where the pair $(\Phi, \Psi) \in H^{-\frac{1}{2}}(\partial D)\times H^{-\frac{1}{2}}(\partial D)$ is the unique solution to
	\begin{equation}\label{eq:integraleqsystem}
	\begin{dcases}
	\mathcal{S}^{k_m}_D[\Psi](x)-\mathcal{S}^{k_c}_D[\Phi](x)=F_1,& x\in\partial D,\\
	\frac{1}{\varepsilon_m}\left(\frac{1}{2}I+\mathcal{K}^{k_m,*}_D\right)[\Psi](x)+\frac{1}{\varepsilon_c}\left(\frac{1}{2}I-\mathcal{K}^{k_c,*}_D\right)[\Phi](x)=F_2, & x\in\partial D,
	\end{dcases}
	\end{equation}
	and
	\begin{equation*}
	F_1=-u^\text{in}(x), \qquad F_2=-\frac{1}{\varepsilon_m}\frac{\partial u^\text{in}(x)}{\partial \nu}, \qquad x \in \partial D,
	\end{equation*}
	where $\mathcal{K}^{k_m,*}_D$ is the Neumann-Poincar\'e operator introduced in equation \eqref{eq:k*} and $u^{\text{in}}= \mathbf{p}^* \cdot \nabla \Gamma^k(\cdot,z^*)$.

\subsection{Definition of $\tau_{n,1}$}

	\begin{lemma}
		For $k$ small enough, the two-dimensional boundary operator $\widehat{\mathcal{S}}_D^k:\mathcal{H}^*(\partial D)\rightarrow\mathcal{H}^*(\partial D)$ defined as
		\begin{equation}
		\widehat{\mathcal{S}}_D^{k}[\phi](x)=\mathcal{S}_D^0[\phi](x)+\eta_k\int_{\partial D}\phi(y)\mathrm{d}\sigma(y),
		\end{equation}
		is invertible and
		\begin{equation}
		\left(\widehat{\mathcal{S}}_D^k\right)^{-1}=\widetilde{\mathcal{S}}_D^{-1}-\left\langle\widetilde{\mathcal{S}}_D^{-1}[\cdot],\varphi_0\right\rangle_{\mathcal{H}^*(\partial D) }\varphi_0-\mathcal{U}_k,
		\end{equation}
		where $$\mathcal{U}_k=\dfrac{\left\langle\widetilde{\mathcal{S}}_D^{-1}[\cdot],\varphi_0\right\rangle_{\mathcal{H}^*(\partial D)}}{\mathcal{S}_D[\varphi_0]+\eta_k} \varphi_0$$ and $\eta_k=(1/2\pi)(\log{k}+\gamma-\log{2})-i/4,$
		with the constant $\gamma$ being the Euler constant. Note that $\mathcal{U}_k=\mathcal{O}(1/ \log k)$.
	\end{lemma}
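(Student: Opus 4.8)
The plan is to exhibit $\widehat{\mathcal{S}}_D^k$ as a rank-one perturbation of the invertible operator $\widetilde{\mathcal{S}}_D$ and then to invert it by the Sherman--Morrison formula; the announced expression for $(\widehat{\mathcal{S}}_D^k)^{-1}$, as well as the estimate $\mathcal{U}_k=\mathcal{O}(1/\log k)$, will then fall out after an elementary simplification.

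First I would assemble three preliminary facts. (i) Since the inner product on $\mathcal{H}^*(\partial D)$ is $\langle u,v\rangle_{\mathcal{H}^*}=-\langle\widetilde{\mathcal{S}}_D[v],u\rangle_{1/2,-1/2}$ and $\widetilde{\mathcal{S}}_D[\varphi_0]=-\chi(\partial D)$ by definition, one gets $\langle\phi,\varphi_0\rangle_{\mathcal{H}^*}=\langle\chi(\partial D),\phi\rangle_{1/2,-1/2}=\int_{\partial D}\phi\,\dd\sigma$ for every $\phi$; in particular $\int_{\partial D}\varphi_0\,\dd\sigma=\|\varphi_0\|_{\mathcal{H}^*}^{2}=1$ and $\widetilde{\mathcal{S}}_D^{-1}[\chi(\partial D)]=-\varphi_0$. (ii) Because $\mathcal{K}_D^*[\varphi_0]=\tfrac12\varphi_0$, the jump relation $\partial_\nu\mathcal{S}_D^0[\varphi_0]\big|_{-}=(-\tfrac12 I+\mathcal{K}_D^*)[\varphi_0]=0$ shows that $\mathcal{S}_D^0[\varphi_0]$ is harmonic in $D$ with vanishing interior Neumann trace, hence constant on $\partial D$; denote this constant by $c_D$ (it is the scalar written $\mathcal{S}_D[\varphi_0]$ in the statement). (iii) The operators $\widetilde{\mathcal{S}}_D$ and $\mathcal{S}_D^0$ coincide on the hyperplane $\{\phi:\int_{\partial D}\phi\,\dd\sigma=0\}=\varphi_0^{\perp}$ and differ by $\mathcal{S}_D^0[\varphi_0]-\widetilde{\mathcal{S}}_D[\varphi_0]=(c_D+1)\chi(\partial D)$ on $\varphi_0$, so in view of (i)
\[
\mathcal{S}_D^0=\widetilde{\mathcal{S}}_D+(c_D+1)\,\langle\,\cdot\,,\varphi_0\rangle_{\mathcal{H}^*}\,\chi(\partial D).
\]

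Adding the extra term $\eta_k\int_{\partial D}\phi\,\dd\sigma=\eta_k\langle\phi,\varphi_0\rangle_{\mathcal{H}^*}$ from the definition of $\widehat{\mathcal{S}}_D^k$ then gives the rank-one identity
\[
\widehat{\mathcal{S}}_D^{k}=\widetilde{\mathcal{S}}_D+\beta\,\langle\,\cdot\,,\varphi_0\rangle_{\mathcal{H}^*}\,\chi(\partial D),\qquad \beta:=c_D+1+\eta_k .
\]
I would then apply the Sherman--Morrison identity with update vector $\chi(\partial D)$ and functional $\beta\langle\,\cdot\,,\varphi_0\rangle_{\mathcal{H}^*}$. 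Using $\widetilde{\mathcal{S}}_D^{-1}[\chi(\partial D)]=-\varphi_0$ from (i), the scalar appearing in the Sherman--Morrison denominator is
\[
1+\beta\,\langle\widetilde{\mathcal{S}}_D^{-1}[\chi(\partial D)],\varphi_0\rangle_{\mathcal{H}^*}=1-\beta=-(c_D+\eta_k).
\]
Since $\eta_k=\tfrac1{2\pi}(\log k+\gamma-\log 2)-\tfrac{i}{4}$ satisfies $|\eta_k|\to\infty$ as $k\to0$, this quantity is nonzero for $k$ small enough; this simultaneously proves invertibility and legitimises the formula. Substituting and simplifying yields
\[
(\widehat{\mathcal{S}}_D^{k})^{-1}=\widetilde{\mathcal{S}}_D^{-1}-\frac{c_D+1+\eta_k}{c_D+\eta_k}\,\langle\widetilde{\mathcal{S}}_D^{-1}[\,\cdot\,],\varphi_0\rangle_{\mathcal{H}^*}\,\varphi_0 ,
\]
and writing $\dfrac{c_D+1+\eta_k}{c_D+\eta_k}=1+\dfrac{1}{c_D+\eta_k}$ splits the last term precisely into $\langle\widetilde{\mathcal{S}}_D^{-1}[\,\cdot\,],\varphi_0\rangle_{\mathcal{H}^*}\varphi_0$ plus $\mathcal{U}_k$, which is the claimed formula. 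The bound $\mathcal{U}_k=\mathcal{O}(1/\log k)$ is then immediate, since $c_D+\eta_k=\tfrac1{2\pi}\log k+\mathcal{O}(1)$ while $\langle\widetilde{\mathcal{S}}_D^{-1}[\,\cdot\,],\varphi_0\rangle_{\mathcal{H}^*}\varphi_0$ is a fixed bounded (rank-one) operator.

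The only genuinely delicate step is (iii): one has to handle the piecewise definition of $\widetilde{\mathcal{S}}_D$ and the mapping properties of the single-layer potential and of its regularisation carefully enough to be sure that $\mathcal{S}_D^0-\widetilde{\mathcal{S}}_D$ is exactly the rank-one operator $(c_D+1)\langle\,\cdot\,,\varphi_0\rangle_{\mathcal{H}^*}\chi(\partial D)$ — which itself rests on the constancy of $\mathcal{S}_D^0[\varphi_0]$ established in (ii). Once that identity is in place, everything else is a routine Sherman--Morrison computation combined with the elementary asymptotics of $\eta_k$.
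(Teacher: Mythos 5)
Your proof is correct and complete. The key identity $\mathcal{S}_D^0=\widetilde{\mathcal{S}}_D+(c_D+1)\langle\,\cdot\,,\varphi_0\rangle_{\mathcal{H}^*(\partial D)}\chi(\partial D)$ is established carefully (the constancy of $\mathcal{S}_D^0[\varphi_0]$ on $\partial D$ via the vanishing interior Neumann trace is exactly the right ingredient), and the Sherman--Morrison computation, the non-vanishing of the denominator $c_D+\eta_k$ for small $k$ because $\eta_k\sim\tfrac{1}{2\pi}\log k$, and the resulting $\mathcal{O}(1/\log k)$ bound on $\mathcal{U}_k$ all check out. The paper itself does not prove this lemma (it is quoted from the references as a classical fact), but your rank-one-perturbation argument is the standard route to it and reproduces the stated formula exactly.
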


	\begin{definition}\label{def:tau1}
\begin{equation*}
\tau_{n,1}=\left\langle\mathcal{A}_{D,1}\varphi_n,\varphi_n\right\rangle_{\mathcal{H}^*(\partial D)}
\end{equation*}

with
\begin{equation*}
		\mathcal{A}_{D,1} =\frac{1}{\varepsilon_m} \mathcal{K}^{(1)}_{D,1}(I -\mathcal{P}_{\mathcal{H}^*_0}) +  \left(\frac{1}{2}I - \mathcal{K}^*_D\right) \widetilde{\mathcal{S}}_D^{-1} \mathcal{S}_{D,1}^{(1)} \left(\frac{1}{\varepsilon_D}I - \frac{1}{\varepsilon_m } \mathcal{P}_{\mathcal{H}^*_0} \right),
\end{equation*}
	\end{definition}

\bibliographystyle{siam}
\bibliography{references}

\end{document}